\newtheorem{theorem}{Theorem}[section]
\newtheorem{lemma}{Lemma}[section]
\def\E{\mathbb{E}}
\def\H{\mathcal{H}}
\def\3{\interleave}
\def\G{\Gamma}
\def\R{\mathbb R}
\def\<{\langle} \def\>{\rangle}
\def\kk{\kappa}
\begin{document}

\title{\textbf{Matrix-valued SDEs arising from currency exchange markets}}
\author{Panpan Ren$^2$\footnote{corresponding author} and Jiang-Lun Wu$^{1,2,}$ \\
{\small\it $^1$ School of Mathematics, Northwest University, Xi'an, Shaanxi 710127, P R China}  \\
{\small\it $^2$ Department of Mathematics, Swansea University, Swansea SA2 8PP, UK} \\
{\small Email: 673788@swansea.ac.uk; j.l.wu@swansea.ac.uk}  }

\date{}

\maketitle

\begin{abstract}
In this paper, motivated by modelling currency exchange markets with matrix-valued stochastic processes, matrix-valued stochastic differential equations (SDEs) 
are formulated. This is done based on the matrix trace, as for the purpose of modelling currency exchange markets. To be more precise, we set up a Hilbert 
space structure for $n\times n$ square matrices via the trace of the Hadamard product of two matrices. With the help of this framework, one can then define 
stochastic integral of It\^o type and It\^o SDEs. Two types of sufficient conditions are discussed for the existence and uniqueness of solutions to the 
matrix-valued SDEs.  

 \end{abstract}

\noindent\textbf{MSC 2010:} 91B70; 60H10; 60B20; 15B52.

\noindent\textbf{Key words:}  Matrix-valued stochastic differential equations; existence and uniqueness; matrix trace; the Hadamard product; the Kronecker product.  

\section{Introduction} 
Analysing currency trading, or to be more precise, modelling, predicting and hedging foreign currency exchange rates are important problems, giving that modern communication 
technology nowadays brings unified (global) financial market setting worldwide, making the currency exchange markets more and more complicated on one side and highly demanding 
deep mathematical analysis and high computing in the micro level model on the other side. This then challenges theoretical considerations as well as computing technology profoundly. 
The latter is linked to deep machine learning and data analysis, see, e.g. \cite{RenWu} and references therein. 

The objective of the present paper is to set up a theoretical yet basic framework for the market dynamics of currency exchange markets with stochastic volatility, in which we aim to 
formulate matrix-valued SDEs for the market dynamics structure. Matrix-valued SDEs appear naturally as an important extension of vector-valued SDEs, as discussed by many 
authors, see e.g. \cite{IkedaW,RevuzYor,Oksendal,Protter} and references therein. There are some considerations of matrix-valued SDEs applying to finance, cf. for instance 
\cite{Jaschke, DuanYan,LiqingYan}, just mention a few. The main difficult for matrix-valued SDEs is non-commutativity, as one needs to consider either left or right matrix-valued 
stochastic processes, or even employing Lie group and Lie algebra structures or random matrices such as Dyson Brownian motion, as investigations of matrix-valued SDEs 
in diffusion particle systems e.g. \cite{KatoriTanemura}. Here we formulate another type matrix-valued SDEs, yet simple but useful for currency market modelling, via certain symmetry operations of matrices. It is hoped that our framework could be useful for certain parameter (matrix) estimates with data analysis which we plan to carry out in our forthcoming work. 

Let $\mathbb{R}^n\otimes\mathbb{R}^n$ be the totality of $n\times n$ square (real) matrices. Based on the observation of currency exchange 
markets, we use the matrix trace to define the Hilbert-Schmidt inner product $\langle\cdot,\cdot\rangle_{HS}$ (via the Hadamard product) of any two matrices so that 
$(\mathbb{R}^n\otimes\mathbb{R}^n$,$\langle\cdot,\cdot\rangle_{HS})$ becomes a Hilbert space. This features the starting point of our study in this paper. 
With this Hilbert space in hand, we can define It\^o stochastic integral for (adapted) square matrix-valued stochastic processes against matrix-valued Brownian motions and 
hence we can formulate matrix-valued stochastic integral equations rigorously.  We are mainly concerned with the existence and uniqueness of the solutions of such matrix-valued 
SDEs. We will present two types of sufficient conditions for the existence and uniqueness results, those sufficient conditions are extensions of corresponding conditions 
in \cite{AlbBrzWu,TrumanWu}, respectively.  

The rest of the paper is organised as follows. Preliminaries on currency market features, (square) matrices and the Hilbert space structure, matrix-valued Brownian motion and It\^o 
stochastic calculus are presented in Section 2.  We then establish, in Section 3, the existence and uniqueness of solutions to matrix-valued SDEs under the Lipschitz and growth 
conditions (in terms of $||\cdot||_{HS}$ norm).  Section 4,  the final section, is devoted to the derivation of It\^o formula for the solutions of our matrix-valued SDEs. And as an application 
of It\^o formula, another set of sufficient conditions such as monotone and local Lipschitz conditions are discussed for the existence and uniqueness of solutions of our matrix-valued SDEs.  

\section{Preliminaries}
\subsection{Motivation of matrix-valued SDEs}
To start with, we would like to set up the currency exchange market via matrix formulation. We assume that there are $n (\ge2)$ different currencies 
which are tradable in a currency exchange market. Set the (ordered) list of currencies by $\Lambda_0:=\{1,...,n\}$. 
Just envisage a table that we list all currencies in a row and in a column in order, so that all the pairing exchange rates (including a currency exchanges with itself) form a matrix. 
Let $\Lambda_1:=\{1,...,N\}$ be the trading dates. There are two kind of the matrices related to the currency markets as following 

\begin{itemize} 
	
	\item Foreign exchange rate $S^{(k)}$ and $S^{(k+1)}$ at date $k$ and date $k+1$ respectively,
	\begin{equation}\label{SK}
	S^{(k)}= \left(\begin{array}{ccccc}
	s_{11}^{(k)} & \bar s_{12}^{(k)}&\cdots&\bar s_{1n}^{(k)}\\
	\underline s_{21}^{(k)} & s_{22}^{(k)}&\cdots&\bar s_{2n}^{(k)}\\
	\vdots & \vdots &\ddots&\vdots\\
	\underline s_{n1}^{(k)} &\underline s_{n2}^{(k)}&\cdots&s_{nn}^{(k)}\\
	\end{array}
	\right) 
	\end{equation}
	and 
	\begin{equation}\label{SK+1}
	S^{(k+1)}= \left(\begin{array}{ccccc}
	s_{11}^{(k+1)} & \bar s_{12}^{(k+1)}&\cdots&\bar s_{1n}^{(k+1)}\\
	\underline s_{21}^{(k+1)} & s_{22}^{(k+1)}&\cdots&\bar s_{2n}^{(k+1)}\\
	\vdots & \vdots &\ddots&\vdots\\
	\underline s_{n1}^{(k+1)} &\underline s_{n2}^{(k+1)}&\cdots&s_{nn}^{(k+1)}\\
	\end{array}
	\right), 
	\end{equation}
where, for  each pair  $(i,j)\in\Lambda_0\times\Lambda_0$ with $i<j$, $\bar s_{ij}^{(\cdot)} $ denotes the currency exchange rate for the investor to buy the  foreign currency $j$ via the (home) currency $i$ at the date $k\in\Lambda_1$; and for  $i>j$, $\underline s_{ij}^{(\cdot)}$ stands for the currency exchange rate for the investor to sell the foreign currency $j$ to get 
the home currency $i$ at the date $k\in\Lambda_1$. The entries above the main diagonal are called the upper triangular part to show the bank quotations of selling and the entries below the main diagonal are named as the lower triangular part to show the bank quotations of buying.  When $i=j$ the value of $s_{ij}$ equals to one, as the currencies against itself is always one. 

\item With the foreign exchange market mechanism (showed in Figure), when trading currencies,  investors buy (or sell) foreign currencies at the date $k$ and then  sell (or buy) foreign currencies at the day $k+1$. Then we have following two combined matrices for investors to trade currencies at different time.
\begin{equation*} 
S_{(k+1)}^{(k)}= \left(\begin{array}{ccccc}
s_{11}^{(k)} & \bar s_{12}^{(k)}&\cdots&\bar s_{1n}^{(k)}\\
\underline s_{21}^{(k+1)} & s_{22}^{(k)}&\cdots&\bar s_{2n}^{(k)}\\
\vdots & \vdots &\ddots&\vdots\\
\underline s_{n1}^{(k+1)} &\underline s_{n2}^{(k+1)}&\cdots&s_{nn}^{(k)}\\
\end{array}
\right),
\end{equation*}
and 
\begin{equation*} 
S_{(k)}^{(k+1)}= \left(\begin{array}{ccccc}
s_{11}^{(k+1)} & \bar s_{12}^{(k+1)}&\cdots&\bar s_{1n}^{(k+1)}\\
\underline s_{21}^{(k)} & s_{22}^{(k)}&\cdots&\bar s_{2n}^{(k+1)}\\
\vdots & \vdots &\ddots&\vdots\\
\underline s_{n1}^{(k)} &\underline s_{n2}^{(k)}&\cdots&s_{nn}^{(k)}\\
\end{array}
\right),
\end{equation*}
\end{itemize} 

For more related matrices which linked to the real world transaction, the reader is refered to  \cite{RenWu}.

Take a real world observation, one can have the following graph for the pairing 
exchange rate for any chosen pair of daily trading currencies.      
    
\begin{center}
	\includegraphics[scale=0.96] {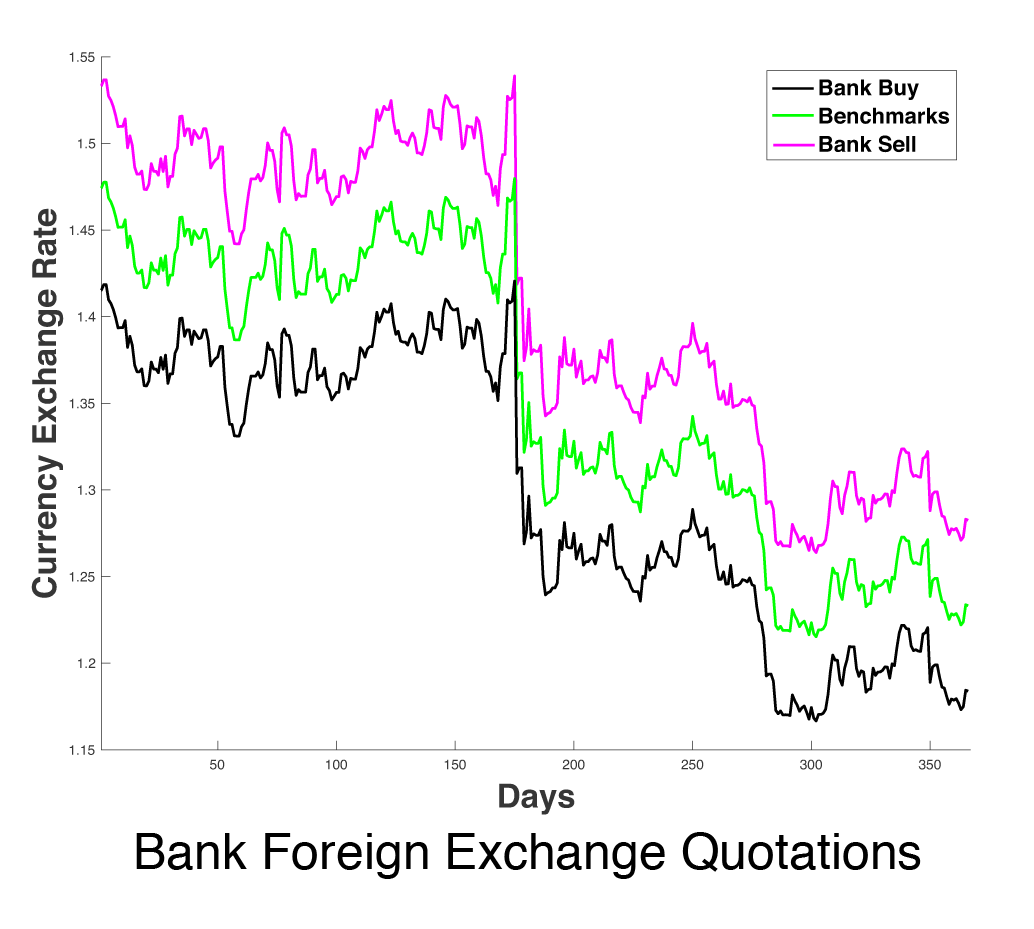}
\end{center}
This graph shows the foreign exchange rates for investor to buy (pink line) and sell (black line) foreign currencies. 

We observe that the Benchmark (green line) in the graph is volatile so it consists the mean value part of the foreign exchange rates for buying and selling currencies and 
the volatile factors caused by uncertainty of the currency market, in which the volatility of the pink line and black line are followed proportionally by the green line. This 
suggests that the continuous time modelling should involve SDEs. Moreover, envisage the table of currency exchange rates, one ends up 
naturally with matrix-valued SDEs.  

The value in the figure we can explicate in the matrices, then we can define a matrix-valued  SDE as following 
$\mathbb{R}^{n}\otimes \mathbb{R}^{n}$-valued SDE
\begin{equation*}
dS(t)=b(t,S(t))dt+\sigma(t,S(t))dB_t ,~~~~t>0,~~~~~S_0=s\in
\mathbb{R}^{n}\otimes \mathbb{R}^{n},
\end{equation*}
where $b,\sigma:\mathbb{R}_+\times \mathbb{R}^{n}\otimes
\mathbb{R}^{n}\rightarrow \mathbb{R}^{n}\otimes \mathbb{R}^{n}$ and
$(B_t)_{t\geq 0}$ is an $\mathbb{R}^{n}\otimes
\mathbb{R}^{n}$-valued Brownian motion we will define it later. Note that,  $b(t,S(t))dt$ can be represented as the macro-index such as consumer price index, inflation rate, interest rate, 
taxation, etc., which the mean value of the currency exchange rates and $\sigma(t,S(t))dB_t$ can be represented as the micro-index such as volume of transaction per day, transaction cost, etc., which influence the range of the high fluctuation based on the mean value of the currency exchange rates.

\subsection{$\mathbb{R}^{n}\otimes \mathbb{R}^{n}$-valued Brownian motion}
For arbitrarily fixed positive integer $n\ge2$, we let $\mathbb{R}^{n}$ be the $n$-dimensional Euclidean space
with the inner product $\langle\cdot , \cdot\rangle$ which induces the norm $|\cdot|$ and $\mathbb{R}^{n}\otimes \mathbb{R}^{n}$ the totality of all $n\times n$-matrices with real 
entries. For $A=(a_{ij})_{n\times n}\in \mathbb{R}^{n}\otimes \mathbb{R}^{n}$, $\mbox{ trace(A)}$ means the trace of $A$, i.e., $\mbox{trace(A)}=\sum_{i=1}^na_{ii}$.  Set
\begin{equation}\label{Huaxianzi0}
\langle A,B \rangle_{HS}:=\mbox{trace}(A^{T}B),~~~~~~~A, B \in \mathbb{R}^{n}\otimes \mathbb{R}^{n},
\end{equation}
where $A^{T}$ signifies  the transpose of $A$. 
 For any $A,B,C \in \mathbb{R}^{n}\otimes \mathbb{R}^{n}$ and $\lambda\in\R$,   a simple calculation shows that 
 \begin{itemize}
 \item Positive-definiteness: $\langle A,A\rangle_{HS}\geq 0$ and $\langle A,A\rangle_{HS}=0\Leftrightarrow A=0$;
 \item Conjugate symmetry: $\langle A,B\rangle_{HS}={\langle B,A\rangle}_{HS}$;
 \item Linearity: $\langle \lambda A,B\rangle_{HS}=\lambda \langle A,B\rangle_{HS}$ and $\langle A+B,C\rangle_{HS}=\langle A,C\rangle_{HS}+\langle B,C\rangle_{HS}$.
 \end{itemize} 
So $(\mathbb{R}^n\otimes \mathbb{R}^n, \langle\cdot,\cdot\rangle_{HS})$ is a real ($n^2$-dimensional) Hilbert space whose inner product induces the Hilbert Schmidt norm 
$\|\cdot\|_{HS}$. For $1\le i,j\le n$, let $e_{ij}\in\R^n\otimes\R^n$, in which the entry  locates in the intersection of the $i$-th row and the $j$-th column is $1$ and the other
entries are equal to zero. Observe that  $\langle e_{ij},e_{kl}\rangle_{HS}=1$ for $i=k, j=l$,  otherwise $\langle e_{ij},e_{kl}\rangle_{HS}=0$. Hence 
 $e_{ij},i,j=1,2,...,n$, are linear independent and any $A\in\R^n\otimes\R^n$ can be represented by  the family $\{e_{ij}\}_{1\le i,j\le n}$. Therefore, 
$\{e_{ij}\}_{1\le i,j\le n}$ forms an orthogonal basis of $ \mathbb{R}^{n}\otimes \mathbb{R}^{n}$. For $1\le i,j\le n$,  let $\{B^{(ij)}_t,{t\ge0}\}_{i,j=1,2...,n}$ be a family  
of independent $\mathbb{R}$-valued Brownian motions defined on a given filtered probability space $(\Omega, \mathcal{F},(\mathcal {F}_t)_{t\ge0},\mathbb{P})$. Set
\begin{equation}\label{penzhai}
B_t:=\sum_{i,j=1}^{n}B^{(ij)}_te_{ij}, \quad t\ge0. 
\end{equation}
For any $x,y\in  \mathbb{R}^{n}\otimes \mathbb{R}^{n}$,  the tensor product between $x$ and $y$, which is a linear operator from $\mathbb{R}^{n}\otimes \mathbb{R}^{n}$ to 
$\mathbb{R}^{n}\otimes \mathbb{R}^{n}$, is defined via the Kronecker product of matrices, by 
\begin{equation*} \label{huajiang}
(x\otimes y)(z)=\langle x,z\rangle_{HS} y,~~~~z\in  \mathbb{R}^{n}\otimes \mathbb{R}^{n}.
\end{equation*}
The covariation matrix of $B_t$ is given by 
$$\mbox{ Cov}(B_t):=\E((B_t-\mathbb{E} B_t)\otimes (B_t-\mathbb{E} B_t)).$$
Noting that $\mathbb{E} B_t=0$, we then have from \eqref{penzhai} that, for any $u,v\in
\mathbb{R}^{n}\otimes \mathbb{R}^{n}$,
\begin{equation}\label{Belfast}
\begin{split}
 &\E\Big(\langle ((B_t-\mathbb{E} B_t)\otimes (B_t-\mathbb{E} B_t))u, v\rangle_{HS}\Big)\\
 &=\sum_{i,j=1}^{n}\sum_{k,l=1}^{n}\mathbb{E}\Big(B^{(ij)}_tB^{(kl)}_t\langle e_{ij}, u\rangle_{HS} \langle e_{kl}, v\rangle_{HS}\Big)\\
 &=\sum_{i,j=1}^{n}\mathbb{E}\Big((B^{(ij)}_t)^2\langle e_{ij}, u\rangle_{HS} \langle e_{ij}, v\rangle_{HS}\Big)\\
 &=t\sum_{i,j=1}^{n}\langle e_{ij}, u\rangle_{HS} \langle e_{ij}, v\rangle_{HS}\\
 &=t\langle u, v\rangle_{HS},\end{split}
\end{equation}
where in the second identity we have used the fact that $B^{(ij)}_t$ and $B^{(kl)}_t$ are mutually independent in the case $i\neq k$ or $j\neq l$. Therefore, we get that the covariation 
matrix of $B_t$ is $tI_{n\times n}$ with $I_{n\times n}$ being the $n\times n$ identical matrix. This clearly shows that $(B(t))_{t\ge0}$ is an $\R^n\otimes\R^n$-valued Brownian motion. 

\subsection{The It\^o stochastic integrals}
This subsection is devoted to the definition of the It\^o stochastic integrals against $\R^n\otimes\R^n$-valued Brownian motion, defined in \eqref{penzhai}. Let us fix $T>0$ arbitrarily. 
An $\R^n\otimes \R^n$-valued stochastic process $(V(t))_{t\ge0}$ is said to be {\it simple} if there exists a partition $0=t_0\leq t_1\leq \cdots \leq t_{k-1} \leq t_k=T$ of the interval $[0,T]$ 
such that
$$
V(t,\omega)=\sum_{i=0}^{k-1}\varphi_i(\omega)I_{[t_i,t_{i+1})}(t), \quad t\in[0,T],
$$
where the $\R^n\otimes \R^n$-valued random variable $\varphi_i(\omega)$ is $\mathcal{F}_{t_i}$-measurable, $i=0,1,\cdots,k-1$. For the $\R^n\otimes \R^n$-valued simple process 
$(V(t))_{t\ge0}$, the It\^o stochastic integral of $(V(t))_{t\ge0}$ with respect to $(B(t))_{t>0}$ is defined by 
$$
\int_{0}^{T}V(t)dB(t)=\sum_{i=0}^{k-1}\varphi_i(B(t_{i+1})-B(t_i)).
$$
Next, let $\mu^2_T(\mathbb{R}^n\otimes \mathbb{R}^{n})$ be the set of all $(\mathcal{F}_t)_{t\ge0}$-adapted stochastic processes $V:\mathbb{R}_+\rightarrow \mathbb{R}^n\otimes \mathbb{R}^{n}$ such that 
$
\int_{0}^{T}\E\lVert V(t)\rVert^2_{HS}dt<\infty. 
$
To define the It\^o stochastic integral for $V\in\mu^2_T(\mathbb{R}^n\otimes \mathbb{R}^{n})$ with respect to $(B(t))_{t\ge0}$, we need the following approximation.
\begin{lemma}\label{app}
If $V\in\mu^2_T(\mathbb{R}^n\otimes \mathbb{R}^{n})$, there exists a sequence of simple stochastic processes $(V_k(t))_{k\in\mathbb{N}}$ such that 
\begin{equation}\label{app1}
\lim_{k\rightarrow\infty}\int_0^T\E\|V(t)-V_k(t)\|_{HS}^2 dt =0.
\end{equation}
\end{lemma}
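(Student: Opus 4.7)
The plan is to follow the classical three-step approximation scheme familiar from the scalar It\^o theory, carried out componentwise in the Hilbert-Schmidt norm $\|\cdot\|_{HS}$. Throughout, the key is to preserve $(\mathcal{F}_t)$-adaptedness at every reduction, so I would organize the argument so that each approximation is built from the previous one by operations that do not look into the future.

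\textbf{Step 1: reduction to bounded processes.} For $V \in \mu_T^2(\mathbb{R}^n \otimes \mathbb{R}^n)$ and $N \in \mathbb{N}$, set
$$V^{(N)}(t,\omega) := V(t,\omega) \, I_{\{\|V(t,\omega)\|_{HS} \le N\}}.$$
Each $V^{(N)}$ is $(\mathcal{F}_t)$-adapted, bounded by $N$ in $\|\cdot\|_{HS}$, and by the dominated convergence theorem (with dominating function $4\|V(t)\|_{HS}^2$) we get $\int_0^T \mathbb{E}\|V(t)-V^{(N)}(t)\|_{HS}^2 \, dt \to 0$ as $N\to\infty$.

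\textbf{Step 2: reduction to continuous bounded processes.} For a bounded adapted $V$, I would smooth in time by the usual causal averaging
$$V^{\varepsilon}(t,\omega) := \frac{1}{\varepsilon}\int_{(t-\varepsilon)\vee 0}^{t} V(s,\omega)\, ds, \quad t \in [0,T],$$
where the integral is the Bochner integral in the Hilbert space $(\mathbb{R}^n\otimes\mathbb{R}^n, \langle\cdot,\cdot\rangle_{HS})$, taken entry by entry. Since the integrand only uses values $V(s)$ with $s \le t$, the process $V^{\varepsilon}$ remains $(\mathcal{F}_t)$-adapted; it is also sample-path continuous and uniformly bounded in $\|\cdot\|_{HS}$ by the same constant as $V$. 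Lebesgue differentiation (applied componentwise) yields $V^{\varepsilon}(t,\omega) \to V(t,\omega)$ for almost every $(t,\omega)$, and another application of dominated convergence gives $\int_0^T \mathbb{E}\|V(t)-V^{\varepsilon}(t)\|_{HS}^2 \, dt \to 0$ as $\varepsilon\downarrow 0$.

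\textbf{Step 3: approximation of continuous bounded processes by simple processes.} Given a continuous bounded adapted $V$, I would use the left-endpoint step functions on the uniform partition $t_i^{(k)} = iT/k$,
$$V_k(t,\omega) := \sum_{i=0}^{k-1} V(t_i^{(k)},\omega) \, I_{[t_i^{(k)}, t_{i+1}^{(k)})}(t).$$
Each $V_k$ is simple in the sense of the excerpt, and by sample-path continuity $V_k(t,\omega) \to V(t,\omega)$ pointwise; boundedness plus dominated convergence gives convergence in $\int_0^T \mathbb{E}\|\cdot\|_{HS}^2\, dt$. Finally, a diagonal extraction combining Steps 1--3 delivers the required sequence of simple processes approximating the original $V$ in the $\mu_T^2$-norm.

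The main obstacle, and the only substantive one, is Step 2: one must produce a continuous modification without destroying adaptedness, which is exactly why I use the backward-looking average $\frac{1}{\varepsilon}\int_{(t-\varepsilon)\vee 0}^t V(s)\,ds$ rather than a symmetric mollifier in time. Steps 1 and 3 are then routine truncation and sampling arguments. One small technical point in Step 2 is justifying that the Bochner integral $\int_0^t V(s,\cdot)\, ds$ is $\mathcal{F}_t$-measurable as a Hilbert-space-valued random variable; this follows because $V$ is adapted and jointly measurable in $(t,\omega)$ (a consequence of adaptedness together with the approximability by elementary processes, applied coordinatewise to the $n^2$ scalar entries in the basis $\{e_{ij}\}$).
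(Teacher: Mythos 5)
Your proposal follows the same three-step reduction as the paper (truncate to bounded, mollify to continuous, sample to simple), so the overall architecture is identical and the argument is correct. There is, however, one place where you do something genuinely different and, in fact, better: in the mollification step the paper takes $V_k(t)=\int_{-\infty}^{\infty}\phi_k(s-t)V(s)\,ds$ with a symmetric compactly supported kernel $\phi_k$, which averages $V$ over $s\in[t-\tfrac1k,\,t+\tfrac1k]$ and therefore uses \emph{future} values of the process; as literally written this destroys $(\mathcal F_t)$-adaptedness, and the paper never addresses the point. Your causal average $V^{\varepsilon}(t)=\tfrac1\varepsilon\int_{(t-\varepsilon)\vee 0}^{t}V(s)\,ds$ only looks into the past, so adaptedness is preserved for free, and Lebesgue differentiation plus dominated convergence gives the same $L^2$ convergence. (The paper could be repaired by replacing $\phi_k$ with a one-sided mollifier supported in $(-\tfrac1k,0]$, or by noting that for the bounded case it is enough to work with the causal average as you do; but you spotted and fixed the issue yourself.) The other differences are cosmetic: you truncate the unbounded process to $0$ rather than to $kI_{n\times n}$, and you use a clean uniform partition $iT/k$ instead of the paper's slightly awkward $i[T]/2^k$; both choices are equally valid and yours are marginally tidier. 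Your closing remark about measurability of the Bochner integral is a reasonable thing to flag, and it is handled exactly as you say, componentwise in the orthonormal basis $\{e_{ij}\}$.
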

\begin{proof} 
The proof is divided into three steps. 

\noindent{{\bf Step 1:} 
If $V\in\mu^2_T(\mathbb{R}^n\otimes \mathbb{R}^{n})$ is bounded and continuous}, we then set 
$$
V_k(t):=\sum_{i=0}^{2^k-1}V(t_i)I_{[t_i,t_{i+1})}(t)+V(t_{2^k})I_{[t_{2^k},T]}(t), \quad k\in\mathbb{N},
$$
where $t_i=\frac{i[T]}{2^k}, i=0,1,2,...,2^k$, with $[T]$ being the integer part of $T$, and clearly $0=t_0<t_1<t_2<\cdots<t_{2^k-1}<t_{2^k}=[T]\le T$ forms a partition of the interval $[0,T]$. 
By the continuity of $V, V_k(t)\rightarrow V(t)$ as $k\to\infty$. This, together with the dominated convergence theorem, leads to \eqref{app1}.

\noindent{{\bf Step 2:} Let $V\in\mu^2_T(\mathbb{R}^n\otimes \mathbb{R}^{n}) $ be bounded.}  For each $k\ge1$, let $\phi_k\in C^2(\mathbb{R})$ such that 
$\phi_k(x)=0$ for $|x|\geq \frac{1}{k}$ and $\int_{-\infty}^{\infty}\phi_k(x)dx=1$. Set 
$$
	V_k(t):=\int_{-\infty}^{\infty}\phi_k(s-t)V(s)ds.
$$
	We remark that $V_k(t)$ is bounded and continuous due to the fact that $\phi_k$ possesses compact support and that $V$ is uniformly bounded. Moreover, according 
	to the property of mollifier, we get that 
	$$
 \int_{0}^{T}	\E\lVert V(t)-V_k(t)\rVert^2_{HS}ds\rightarrow0~~~~\mbox{as}~~k\rightarrow\infty.
	$$
	\noindent{{\bf Step 3:} For $V\in\mu^2_T(\mathbb{R}^n\otimes \mathbb{R}^{n})$ being unbounded}, let 
	\begin{equation*}
	V_k(t)=\begin{cases}
	kI_{n\times n}~~~~~~~~~~~~~~~\lVert V(t)\rVert_{HS}\geq k\\
	V(t)~~~~~~~~~~~~~~~~~ \lVert V(t)\rVert_{HS}\leq k.
	\end{cases}
	\end{equation*}
A straightforward calculation shows that
	$$
 \int_{0}^{T}	\E\lVert V(t)-V_k(t)\rVert^2_{HS}ds\rightarrow0~~~~\mbox{as}~~k\rightarrow\infty.
   $$
	This completes the proof. 
\end{proof}

With the help of Lemma \ref{app}, one can define the It\^o stochastic integral for $V\in\mu^2_T(\mathbb{R}^n\otimes \mathbb{R}^{n})$ as follows 
\begin{equation}\label{app2}
\int^T_0V(t)dB(t)=\lim_{k\to\infty}\int^T_0V_k(t)dB(t)~~~~~~\mbox { in } L^2(\Omega; (\mathbb{R}^n\otimes \mathbb{R}^n, \langle\cdot,\cdot\rangle_{HS})),
\end{equation}
i.e., 
\begin{equation}\label{eqeq}
\lim_{k\rightarrow \infty}\E\Big\|\int^T_0V(t)dB(t)-\int^T_0V_k(t)dB(t)\Big\|_{HS}^2=0,
\end{equation}
for  any sequence of simple stochastic processes $(V_k(t))_{k\in\mathbb{N}}$ such that 
$$\lim_{k\rightarrow\infty}\int_0^T\E\|V(t)-V_k(t)\|_{HS}^2 dt =0. $$

\subsection{The quadratic variation process of stochastic integrals} 
Before presenting the quadratic variation process of the It\^o stochastic integral $\int_0^t V(s)d B(s)$, $t\ge0$, defined in \eqref{app2}, we need to introduce some notions. 
\begin{itemize}
	\item An $n\times n$ matrix $A$ is nonnegative if 	$\< Aa, a\>_{HS}>0$ for any $a\in \mathbb{R}^n\otimes \mathbb{R}^{n}.$
	\item $A: t\mapsto A(t)\in \mathbb{R}^{n}\otimes \mathbb{R}^{n}$ is non-decreasing, if $A(t)-A(s)$ is nonnegative for $t\geq s\ge0$. In this case, we then write $A(t)\geq A(s),~t\geq s\ge0$. 
	\item For $M\in\mu^2_T(\mathbb{R}^n\otimes \mathbb{R}^{n})$ such that $\< M(t), a\>_{HS}$ for $a\in\mathbb{R}^{n}\otimes \mathbb{R}^{n}$ is an $\mathbb{R}$-valued $\mathcal{F}_t$-martingale, if there exists a nonnegative and non-decreasing $A(t)\in \mathbb{R}^n\otimes \mathbb{R}^{n}$ such that, 
	for any $a,b\in \mathbb{R}^{n}\otimes \mathbb{R}^{n}$,
	$$
	\< M(t), a\>_{HS}\< M(t), b\>_{HS}-\< A(t)a, b\>_{HS}
	$$
	is an $\mathbb{R}$-valued $\mathcal{F}_t$-martingale, then $A(t)\in \mathbb{R}^{n}\otimes \mathbb{R}^{n}$ is called the quadratic variation process of $M(t)$, denoted by $\ll M, M \gg(t)$. 
\end{itemize}

\begin{lemma}
	For $V\in\mu^2_T(\mathbb{R}^n\otimes \mathbb{R}^{n})$, let $M(t)=\int_0^tV(s)d B(s)$. Then, for any $a\in \mathbb{R}^{n}\otimes \mathbb{R}^{n}$, $(\<M(t),a\>_{HS})_{t\ge0}$ is an $\mathbb{R}$-valued $(\mathcal{F}_t)_{t\ge0}$-martingale, and moreover 
	\begin{equation}\label{app9}
	\ll M, M \gg(t)=\int_{0}^{t}V(s)V^T(s)ds.
	\end{equation}
\end{lemma}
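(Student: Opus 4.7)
The plan is to verify both assertions first for simple integrands, where everything reduces to finite sums involving independent Brownian increments, and then to pass to the limit via the $L^2$-approximation of Lemma \ref{app}.

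\textbf{Simple case.} Suppose $V(t)=\sum_{i=0}^{k-1}\varphi_iI_{[t_i,t_{i+1})}(t)$, so that $M(t)=\sum_i\varphi_i\Delta_iB(t)$ with $\Delta_iB(t):=B(t_{i+1}\wedge t)-B(t_i\wedge t)$. Fix $a,b\in\R^n\otimes\R^n$. Expanding in matrix entries gives
\[
\<\varphi_i\Delta_iB,a\>_{HS}=\sum_{p,q,r}(\varphi_i)_{pr}(\Delta_iB)_{rq}\,a_{pq},
\]
and since $\varphi_i$ is $\mathcal{F}_{t_i}$-measurable while $B(t_{i+1})-B(t_i)$ is centred and independent of $\mathcal{F}_{t_i}$, the martingale property of $(\<M(t),a\>_{HS})_{t\ge0}$ follows after choosing a partition refining $\{s,t\}$. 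For the quadratic variation I would compute $\E[\<M(t)-M(s),a\>_{HS}\<M(t)-M(s),b\>_{HS}\mid\mathcal{F}_s]$: cross terms with $i\neq j$ vanish by the tower property and the conditional mean-zero argument just used, while the diagonal terms are handled by
\[
\E[(\Delta_iB)_{rq}(\Delta_iB)_{r'q'}]=(t_{i+1}-t_i)\delta_{rr'}\delta_{qq'},
\]
yielding $(t_{i+1}-t_i)\sum_{p,q,p'}(\varphi_i\varphi_i^T)_{p,p'}a_{pq}b_{p'q}$. Exploiting the symmetry of $\varphi_i\varphi_i^T$, this rewrites as $(t_{i+1}-t_i)\<\varphi_i\varphi_i^T a,b\>_{HS}$, and summing over $i$ produces $\bigl\<\bigl(\int_s^tV(u)V^T(u)\,du\bigr)a,b\bigr\>_{HS}$, as required.

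\textbf{Extension to general $V\in\mu^2_T(\R^n\otimes\R^n)$.} By Lemma \ref{app}, choose simple $V_k\to V$ in $L^2(dt\otimes d\mathbb{P})$. The It\^o isometry \eqref{eqeq} gives $M_k(t)\to M(t)$ in $L^2$, whence $\<M_k(t),a\>_{HS}\to\<M(t),a\>_{HS}$ in $L^2$; the martingale property carries over to the $L^1$ limit. For the quadratic variation identity one further needs $\int_0^tV_kV_k^T\,du\to\int_0^tVV^T\,du$ in $L^1$, which follows from the factorisation $V_kV_k^T-VV^T=(V_k-V)V_k^T+V(V_k-V)^T$ combined with submultiplicativity of $\|\cdot\|_{HS}$ and Cauchy--Schwarz, noting that $(V_k)$ is $L^2$-bounded. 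Finally, $\<A(t)a,a\>_{HS}=\int_0^t\|V^T(s)a\|_{HS}^2\,ds\ge0$ and $A$ is non-decreasing in the sense defined in the excerpt, so $A(t)=\int_0^tV(s)V^T(s)\,ds$ qualifies as $\ll M,M\gg(t)$.

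The main obstacle is the index bookkeeping in the diagonal computation: the sum produced naturally by the Brownian covariance, $\sum_{p,q,p'}(\varphi_i\varphi_i^T)_{p,p'}a_{pq}b_{p'q}$, is not literally the expansion of $\<\varphi_i\varphi_i^T a,b\>_{HS}$; the identification uses crucially the symmetry of $\varphi_i\varphi_i^T$, which is precisely why $VV^T$ rather than $V^TV$ appears in the quadratic variation formula.
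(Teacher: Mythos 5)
Your proposal is correct and follows essentially the same route as the paper: both reduce to the orthogonality of the component Brownian motions $B^{(ij)}$ (cross terms vanish, diagonal terms produce the $du$ integral) and then identify the resulting bilinear form with $\langle(\int V V^T\,du)a,b\rangle_{HS}$. The only presentational difference is that you first treat simple integrands and then pass to the $L^2$ limit, whereas the paper invokes the scalar It\^o isometry/covariation formula directly inside the sum over basis indices $e_{ij}$; your explicit remark that the symmetry of $\varphi_i\varphi_i^T$ is what permits the rewriting $\sum_{p,q,p'}(\varphi_i\varphi_i^T)_{pp'}a_{pq}b_{p'q}=\langle\varphi_i\varphi_i^T a,b\rangle_{HS}$ (and hence why $VV^T$ and not $V^TV$ appears) is a helpful clarification that the paper leaves implicit.
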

\begin{proof} The first statement that,  for any $a\in \mathbb{R}^{n}\otimes \mathbb{R}^{n}$, $(\<M(t),a\>_{HS})_{t\ge0}$ is an $\mathbb{R}$-valued $(\mathcal{F}_t)_{t\ge0}$-martingale 
	 is routine. Let us show the second statement. 
	For any $a,b\in \mathbb{R}^{n}\otimes \mathbb{R}^{n}$ and $t\ge s$, 
	it follows	from \eqref{penzhai} that   
	\begin{equation}\label{niu1}
	\begin{split}
	&	\E\Big(\<M(t), a\>_{HS}\<M(t), b\>_{HS}\Big|\mathcal{F}_s\Big)\\
	&=\sum_{i,j=1}^{n}\sum_{k,l=1}^{n}\E\bigg(\int_{s}^{t}\<V(u)e_{ij}, a\>_{HS}dB^{(ij)}_u\int_{s}^{t}\<V(u)e_{kl}, b\>_{HS}dB^{(kl)}_u\Big|\mathcal{F}_s\bigg)\\
	&=\sum_{i,j=1}^{n}\E\bigg(\int_{s}^{t}\<V(u)e_{ij}, a\>_{HS}\<V(u)e_{ij}, b\>_{HS}du\Big|\mathcal{F}_s\bigg)\\
	&=\sum_{i,j=1}^{n}\E\bigg(\int_{s}^{t}\<e_{ij}, V^T(u)a\>_{HS}\<e_{ij}, V^T(u)b\>_{HS}du\Big|\mathcal{F}_s\bigg)\\
	&=\Big\<\E\Big(\int_{s}^{t}V(u)V^T(s)du\Big)\Big|\mathcal{F}_s\Big)a, b\Big\>_{HS}.
	\end{split}
	\end{equation}
	Next we aim to show that, for any $a,b\in\R^n$
	\begin{equation}\label{niu2}
	\Gamma(t):=\<M(t), a\>_{HS}\<M(t), b\>_{HS}-\Big\<\Big(\int_{0}^{t}V(s)V^T(s)ds\Big)a, b\Big\>_{HS}
	\end{equation}
	is a martingale, i.e.,    for any $t\geq s$,
	$
	\E(\Gamma(t)|\mathcal{F}_s)=\Gamma(s).
	$
	For notation brevity, we set $\Lambda(s,t):=\int_s^tV(u)d B(u)$ for $s\in[0,t]$. Note that $\Lambda(0,t)=M(t)$.
	For any $t\geq s$,
	\begin{align*}\label{niu4}
	E(\Gamma(t)|\mathcal{F}_s)
	&=\Gamma(s)+\E\bigg(\Big(\<M(s), a\>_{HS}\<\Lambda(s,t), b\>_{HS}+\<\Lambda(s,t), a\>_{HS}\<M(s), b\>_{HS}\\
	&\quad+\<\Lambda(s,t) ,a\>_{HS}\<\Lambda(s,t), b\>_{HS}-\int_{s}^{t}\<V(u)V^T(u)a, b\>_{HS}du\Big)\Big| \mathcal{F}_s\bigg)\\
	&=\Gamma(s)+\<M(s), a\>_{HS}\E\Big(\<\Lambda(s,t), b\>_{HS}\Big| \mathcal{F}_s\Big)\\
	&\quad+\E\Big(\<\Lambda(s,t), a\>_{HS}\Big|  \mathcal{F}_s\Big)\<M(s), b\>_{HS}\\
	&\quad+\E\Big(\<\Lambda(s,t), a\>_{HS}\<\Lambda(s,t), b\>_{HS}\Big|  \mathcal{F}_s\Big)\\
	&\quad-\E\bigg(\int_{s}^{t}\<Vu)V^T(u)a, b\>_{HS}du\Big| \mathcal{F}_s\bigg)\\
	&=\Gamma(s)+\E\Big(\<\Lambda(s,t), a\>_{HS}\<\Lambda(s,t), b\>_{HS}\Big|\mathcal{F}_s\Big)\\
	&\quad-\E\Big(\int_{s}^{t}\<V(u)V^T(u)a, b\>_{HS}du\Big|\mathcal{F}_s\Big)\\
	&=\Gamma(s),
	\end{align*}
	where in the last display we have used \eqref{niu1}. Whence, \eqref{app9} holds. 
	The proof is therefore complete. 
\end{proof}

\subsection{It\^o isometry for stochastic integrals}
In this subsection, we aim to establish the It\^o isometry for stochastic integral established. 

\begin{lemma}\label{Tiger}
	{\rm For $V\in\mu^2_T(\mathbb{R}^n\otimes \mathbb{R}^{n})$,
		\begin{equation}\label{RR}
		\E\Big\|\int_0^TV(t)d B(t)\Big\|_{HS}^2=n\int_0^T\E\|V(t)\|_{HS}^2d
		t.
		\end{equation}
	}
\end{lemma}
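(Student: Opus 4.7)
My plan is to prove \eqref{RR} first for simple integrands by a direct calculation that exposes where the factor $n$ comes from, and then pass to general $V\in\mu^2_T(\R^n\otimes\R^n)$ via the density argument already set up in Lemma \ref{app}. An alternative shortcut is available using the quadratic variation formula \eqref{app9} summed against the orthonormal basis $\{e_{ij}\}$, which I will note at the end as a cross-check on the constant $n$.

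Fix a simple $V(t)=\sum_{i=0}^{k-1}\varphi_i I_{[t_i,t_{i+1})}(t)$ with each $\varphi_i$ being $\mathcal{F}_{t_i}$-measurable, and set $\Delta B_i:=B(t_{i+1})-B(t_i)$, so that $\int_0^T V(t)\,dB(t)=\sum_{i=0}^{k-1}\varphi_i\Delta B_i$ interpreted as matrix multiplication. Expanding the squared norm produces cross terms $\E\langle\varphi_i\Delta B_i,\varphi_j\Delta B_j\rangle_{HS}$ that vanish for $i<j$ upon conditioning on $\mathcal{F}_{t_j}$: writing the inner product as $\mathrm{trace}(\Delta B_i^T\varphi_i^T\varphi_j\Delta B_j)$ and pulling out the $\mathcal{F}_{t_j}$-measurable factor $\Delta B_i^T\varphi_i^T\varphi_j$, the remaining conditional expectation of each entry of $\Delta B_j$ is zero. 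For the diagonal terms I would write $\|\varphi_i\Delta B_i\|_{HS}^2=\mathrm{trace}(\Delta B_i^T\varphi_i^T\varphi_i\Delta B_i)$ and use the second-moment identity $\E[(\Delta B_i)_{mp}(\Delta B_i)_{qr}\,|\,\mathcal{F}_{t_i}]=\delta_{mq}\delta_{pr}(t_{i+1}-t_i)$, which is immediate from the independence structure in \eqref{penzhai}. A short index chase then gives $\E\|\varphi_i\Delta B_i\|_{HS}^2=n\,\E\|\varphi_i\|_{HS}^2\,(t_{i+1}-t_i)$; the factor $n$ appears because the sum over the column index of $\Delta B_i$ replicates $\|\varphi_i\|_{HS}^2$ exactly $n$ times. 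Summing over $i$ then gives \eqref{RR} for simple $V$.

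With \eqref{RR} established for simple processes, I invoke Lemma \ref{app} to pick simple $V_k\to V$ in $L^2([0,T]\times\Omega)$; the isometry applied to the differences $V_k-V_\ell$ shows that $\{\int_0^T V_k\,dB\}$ is Cauchy in $L^2(\Omega;\R^n\otimes\R^n)$ and by \eqref{app2} its limit is $\int_0^T V\,dB$, so passing to the limit on both sides of the identity for $V_k$ delivers \eqref{RR} for general $V$. The main obstacle I foresee is the diagonal computation: the factor $n$ is easy to lose or double in the index bookkeeping because the Hilbert--Schmidt formalism hides it. As an independent sanity check, one can take $a=b=e_{ij}$ in \eqref{app9} and sum to obtain $\E\|M(T)\|_{HS}^2=\sum_{i,j}\E\langle A(T)e_{ij},e_{ij}\rangle_{HS}=n\,\E\,\mathrm{trace}\Big(\int_0^T V(s)V^T(s)\,ds\Big)=n\int_0^T\E\|V(s)\|_{HS}^2\,ds$, confirming the constant.
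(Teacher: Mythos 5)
Your proof is correct and follows essentially the same route as the paper: establish the isometry for simple integrands by splitting $\E\bigl\|\sum_i\varphi_i\Delta B_i\bigr\|_{HS}^2$ into diagonal and cross terms, kill the cross terms by conditioning on the appropriate $\mathcal{F}_{t_j}$, compute the diagonal terms to extract the factor $n$, and then pass to general $V$ via Lemma \ref{app}. The only cosmetic difference is that the paper performs the diagonal computation by expanding $\|h_k\Delta B_k\|_{HS}^2=\sum_{i,j}\langle\Delta B_k,h_k^Te_{ij}\rangle_{HS}^2$ over the orthonormal basis and invoking the covariance identity \eqref{Belfast}, whereas you work directly in matrix entries with the second-moment identity; both give the same factor $n$ (one copy for each column of $\Delta B_i$), and your quadratic-variation cross-check via \eqref{app9} is a valid independent confirmation since the paper proves \eqref{app9} from the scalar isometry alone.
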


\begin{proof}
	Firstly, we assume that $(V(t))_{t\in[0,T]}$ is a simple stochastic process in the form $V(t,\omega)=\sum_{k=0}^{m-1}h_k(\omega)I_{[t_k,t_{k+1})}$, where $h_k(\omega)\in \mathcal{F}_{t_k}$ and $0=t_0< t_1< t_2<\cdots < t_m=T$ is a partition of the interval $[0,T]$. Note that  
	\begin{equation*}
	\begin{split}
	\E \Big\lVert  \int_{0}^{T} V(t)dB(t)\Big\lVert _{HS}^2&=\E
	\Big\lVert  \sum_{k=0}^{m-1}h_k\triangle B_k\Big\lVert _{HS}^2\\
	&= \sum_{k=0}^{m-1}\E\|h_k\triangle B_k\|_{HS}^2+\sum_{k\neq
		l}\E\langle h_k\triangle B_k,h_l\triangle B_l\rangle_{HS}\\
	&=:I_1+I_2,
	\end{split}
	\end{equation*}
	where 
	$\triangle B_k:=B(t_{k+1})-B(t_k)$.   
	From \eqref{Huaxianzi0} and  \eqref{Belfast},
	we deduce that
	\begin{align*}
	I_1&=\sum_{k=0}^{m-1}\sum_{i,j=1}^n\E(\langle \triangle
	B_k,h^T_ke_{ij}\rangle_{HS}\langle \triangle
	B_k,h^T_ke_{ij}\rangle_{HS})\\
	&=\sum_{k=0}^{m-1}(t_{k+1}-t_k)\sum_{i,j=1}^n\E\langle
	h^T_ke_{ij},h^T_ke_{ij}\rangle_{HS}\\
	&=\sum_{k=0}^{m-1}(t_{k+1}-t_k)\sum_{i,j=1}^n\E\langle
	h_kh^T_ke_{ij},e_{ij}\rangle_{HS}\\
	&=n\sum_{k=0}^{m-1}(t_{k+1}-t_k)\E\,\mbox{trace}(h^T_kh_k)\\
	&=n\sum_{k=0}^{m-1}(t_{k+1}-t_k)\E\|h_k\|_{HS}^2\\
	&=n\int_0^T\E\|V(t)\|_{HS}^2d t.
	\end{align*}
	Next, without loss of generality, we assume $k>l.$ Then,
	\begin{equation*}
	\begin{split}
	I_2&=\sum_{i,j=1}^n\E(\langle \triangle
	B_k,h^T_ke_{ij}\rangle_{HS}\langle \triangle
	B_l,h^T_le_{ij}\rangle_{HS})\\
	&=\sum_{i,j=1}^n\E(\E(\langle \triangle
	B_k,h^T_ke_{ij}\rangle_{HS}\langle \triangle
	B_l,h^T_le_{ij}\rangle_{HS})|\mathcal{F}_{t_l})\\
	&=\sum_{i,j=1}^n\E(\langle \triangle
	B_l,h^T_le_{ij}\rangle_{HS}\E(\langle \triangle
	B_k,h^T_ke_{ij}\rangle_{HS}|\mathcal{F}_{t_l}))\\
	&=0,
	\end{split}
	\end{equation*}
	where in the second identity we have used the tower property of conditional expectation and in the last two identity we have utilized that $\langle\triangle B_l,h^T_le_{ij}\rangle_{HS}$ is adapted to $\mathcal{F}_{t_l}$ and that, for any  $A\in\mathcal{F}_{t_l}$, 
	\begin{equation*}
	\begin{split}
\E(\langle \triangle
B_k,h^T_ke_{ij}\rangle_{HS}{\bf1}_A)&=\E(\E(\langle \triangle
B_k,h^T_ke_{ij}\rangle_{HS}{\bf1}_A|\mathcal{F}_{t_k}))\\
&=\E(({\bf1}_A\langle h^T_ke_{ij},\E(\triangle
B_k|\mathcal{F}_{t_k})\rangle_{HS})\\
&=\E(({\bf1}_A\langle h^T_ke_{ij},\E\triangle
B_k\rangle_{HS})\\
&=0,
	\end{split}
	\end{equation*} 
	in which in the first identity  we have utilized the tower property of conditional expectation, in the second identity we have used ${\bf1}_Ah_k\in\mathcal{F}_{t_k}$ and the third indentity is due to $\triangle
	B_k$ is independent of $\mathcal{F}_{t_k}$.
	So, \eqref{RR} holds. If  $V\in\mu^2_T(\mathbb{R}^n\otimes \mathbb{R}^{n})$, then, according to Lemma \ref{app},  there exists a sequence of simple   stochastic processes $(V_k(t))_{t\ge0}$ such that 
	\begin{equation*} 
	\lim_{k\rightarrow\infty}\int_0^T\E\|V(t)-V_k(t)\|_{HS}^2 dt =0.
	\end{equation*}
	This, in addition to 
	\begin{equation*}
	\E\Big\|\int_0^TV_k(t)d B(t)\Big\|_{HS}^2=n\int_0^T\E\|V_k(t)\|_{HS}^2d
	t
	\end{equation*}
	due to Lemma \ref{Tiger} and, by virtue of \eqref{eqeq},
	$$
	\lim_{k\rightarrow\infty}\Big\|\int_{0}^{T}V(s)dB(s)-\int_{0}^{T}V_k(s)dB(s)\Big\|_{HS}^2=0
	$$
	gives the desired assertion. 
\end{proof}

\section{Existence and uniqueness for  $\mathbb{R}^{n}\otimes \mathbb{R}^{n}$-valued SDEs} 

We are concerned with the following $\mathbb{R}^{n}\otimes \mathbb{R}^{n}$-valued SDE  
\begin{equation}\label{Belfast1}
dX(t)=b(t,X(t))dt+\sigma(t,X(t))dB_t ,~~~~t>0,~~~~~X_0=x\in
\mathbb{R}^{n}\otimes \mathbb{R}^{n},
\end{equation}
where $b,\sigma:\mathbb{R}_+\times \mathbb{R}^{n}\otimes
\mathbb{R}^{n}\rightarrow \mathbb{R}^{n}\otimes \mathbb{R}^{n}$ are jointly measurable and
$(B_t)_{t\geq 0}$ is an $\mathbb{R}^{n}\otimes
\mathbb{R}^{n}$-valued Brownian motion  defined as in
\eqref{penzhai}.

For the fixed   time horizontal $T>0$, we assume that there exist integrable functions $\kk_1,\kk_2:
[0,T]\rightarrow\mathbb{R}_{+}$, such that 
\begin{enumerate}
	\item[({\bf A1})]
	$ \lVert
	b(t,x)-b(t,y)\rVert^2_{HS}+\lVert\sigma(t,x)-\sigma(t,y)\rVert^2_{HS}
	\leq \kk_1(t)\lVert x-y\rVert^2_{HS}, $
	\item[({\bf A2})] $\|b(t,0)\|_{HS}^2+\|\sigma(t,0)\|_{HS}^2\le\kk_2(t).$
\end{enumerate}

It is clear from ({\bf A1}) and ({\bf A2}) that

\begin{equation}\label{66}
\begin{split}
&\lVert
b(t,x)\rVert^2_{HS}+\lVert\sigma(t,x)\rVert^2_{HS}\\&\le2\{\lVert
b(t,0)\rVert^2_{HS} +\lVert\sigma(t,0)\rVert^2_{HS}\\
&\quad+\lVert
b(t,x)-b(t,0)\rVert^2_{HS}+\lVert\sigma(t,x)-\sigma(t,0)\rVert^2_{HS}\}\\
&\le 2\kk(t)(1+\|x\|_{HS}^2),
\end{split}
\end{equation}
where $\kk(t):=\kk_1(t)+\kk_2(t).$ Therefore, under ({\bf A1}) and ({\bf A2}), $b$ and $\sigma$ are of linear growth.

\begin{theorem}\label{push}
	Under $({\bf A1}) $ and $({\bf A2})$, \eqref{Belfast1} has a unique global strong solution $(X(t))_{t\ge0}$, 
	which satisfies that
	\begin{equation}
	\E\Big( \sup_{0\leq t\leq T}\lVert X(t)\rVert^2_{HS} \Big)\leq(1+3\lVert x\rVert^2_{HS}) e^{6(T+4n)\int _{0}^{T}\kk(s)ds}.
	\end{equation}
	
\end{theorem}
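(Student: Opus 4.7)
The plan is to use classical Picard iteration adapted to the Hilbert--Schmidt framework, deduce uniqueness by Gr\"onwall, and establish the moment bound via Doob's submartingale inequality combined with the It\^o isometry of Lemma 2.4 (which contributes the factor $n$).

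\textbf{Picard scheme and existence.} Set $X^{(0)}(t)\equiv x$ and recursively
\[
X^{(k+1)}(t)=x+\int_0^t b(s,X^{(k)}(s))\,ds+\int_0^t\sigma(s,X^{(k)}(s))\,dB(s), \qquad k\ge 0.
\]
An induction using the linear growth estimate \eqref{66} and the It\^o isometry shows each $X^{(k)}\in\mu^2_T(\R^n\otimes\R^n)$, so every stochastic integral above is well defined. Subtracting two consecutive iterates, using $\|a+b\|^2_{HS}\le 2(\|a\|^2_{HS}+\|b\|^2_{HS})$, applying the Cauchy--Schwarz inequality to the drift term (picking up a factor $T$) and Lemma \ref{Tiger} to the diffusion term (picking up the factor $n$), together with $({\bf A1})$ yields
\[
\E\|X^{(k+1)}(t)-X^{(k)}(t)\|_{HS}^2\le 2(T+n)\int_0^t\kk_1(s)\,\E\|X^{(k)}(s)-X^{(k-1)}(s)\|_{HS}^2\,ds.
\]
Iterating this inequality produces a geometric-type bound of Weierstrass type, so $\sum_k\sup_{0\le t\le T}\E\|X^{(k+1)}(t)-X^{(k)}(t)\|_{HS}^2<\infty$. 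Hence $(X^{(k)})$ is Cauchy in a complete subspace of $L^2(\Omega;C([0,T];\R^n\otimes\R^n))$ (after a standard subsequence/continuity argument using Doob and BDG-type estimates to pass to uniform-in-$t$ convergence), and its limit $X$ solves \eqref{Belfast1}.

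\textbf{Uniqueness.} If $X$ and $Y$ are two strong solutions with $X(0)=Y(0)=x$, the same Cauchy--Schwarz/It\^o isometry/$({\bf A1})$ argument gives
\[
\E\|X(t)-Y(t)\|_{HS}^2\le 2(T+n)\int_0^t\kk_1(s)\,\E\|X(s)-Y(s)\|_{HS}^2\,ds,
\]
so Gr\"onwall's lemma forces $X=Y$ up to indistinguishability.

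\textbf{Moment bound.} From \eqref{Belfast1}, $\|a+b+c\|^2_{HS}\le 3(\|a\|^2_{HS}+\|b\|^2_{HS}+\|c\|^2_{HS})$, and Cauchy--Schwarz for the drift,
\[
\sup_{0\le r\le t}\|X(r)\|_{HS}^2\le 3\|x\|_{HS}^2+3T\int_0^t\|b(s,X(s))\|_{HS}^2\,ds+3\sup_{0\le r\le t}\Big\|\int_0^r\sigma(s,X(s))\,dB(s)\Big\|_{HS}^2.
\]
Since each coordinate $\<M(t),e_{ij}\>_{HS}$ is an $\R$-valued martingale, $\|M(t)\|_{HS}$ is a non-negative submartingale by conditional Jensen applied componentwise; Doob's $L^2$ maximal inequality combined with Lemma \ref{Tiger} then gives the key sup estimate
\[
\E\sup_{0\le r\le t}\Big\|\int_0^r\sigma(s,X(s))\,dB(s)\Big\|_{HS}^2\le 4n\int_0^t\E\|\sigma(s,X(s))\|_{HS}^2\,ds.
\]
Inserting \eqref{66} and collecting constants with $f(t):=\E\sup_{0\le r\le t}\|X(r)\|_{HS}^2$ produces
\[
f(t)\le 3\|x\|_{HS}^2+6(T+4n)\int_0^t\kk(s)\,(1+f(s))\,ds.
\]
Adding $1$ to both sides and applying Gr\"onwall to $1+f$ delivers exactly the claimed bound $(1+3\|x\|_{HS}^2)\,e^{6(T+4n)\int_0^T\kk(s)\,ds}$.

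\textbf{Main obstacle.} The principal novelty relative to the scalar It\^o theory is the factor $n$ in the It\^o isometry, which must be tracked carefully throughout so it propagates correctly into the $(T+n)$ and $(T+4n)$ constants. A secondary subtlety is that the sup moment estimate requires an a priori finiteness: one first runs the argument with $X$ replaced by the stopped process $X(t\wedge\tau_k)$ for $\tau_k:=\inf\{t\ge 0:\|X(t)\|_{HS}\ge k\}$, applies Gr\"onwall on the localized bound, and then lets $k\to\infty$ by monotone convergence to recover the global estimate. Everything else is a verbatim adaptation of the scalar-valued argument to the Hilbert space $(\R^n\otimes\R^n,\<\cdot,\cdot\>_{HS})$.
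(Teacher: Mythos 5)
Your proof is correct, and for the existence--uniqueness part you take a genuinely different route from the paper. You run the classical Picard iteration, showing that the successive differences satisfy an integral inequality which, iterated, gives a Weierstrass-summable bound, and then you deduce $L^2$-Cauchyness. The paper instead invokes the Banach fixed point theorem on an exponentially weighted space
\begin{equation*}
\H_{2,\lambda}:=\Big\{u \mbox{ predictable }:\ \sup_{t\in[0,T]}e^{-\lambda t}\big(\E\|u(t)\|_{HS}^2\big)^{1/2}<\infty\Big\},
\end{equation*}
proving that the solution map $\Gamma(u)(t)=x+\int_0^t b(s,u(s))ds+\int_0^t\sigma(s,u(s))dB(s)$ maps $\H_{2,\lambda}$ to itself and is a strict contraction there once $\lambda$ is large enough. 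The two approaches are equivalent in strength; the weighted-norm trick buys a single clean contraction on $[0,T]$ for free (no iteration over small time intervals and no telescoping series), while your Picard scheme is more elementary and directly exhibits the solution as a limit of explicit approximants. Both pick up the same constants from the Cauchy--Schwarz estimate on the drift (contributing $T$) and from Lemma \ref{Tiger} on the diffusion (contributing $n$), so your constant $2(T+n)$ in the contraction step is consistent with the paper's choice of $\lambda$.

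The moment bound is carried out essentially identically to the paper: the $3(a^2+b^2+c^2)$ decomposition, Cauchy--Schwarz for the drift, Doob's $L^2$ maximal inequality for the nonnegative submartingale $\|\int_0^\cdot\sigma\,dB\|_{HS}$ (factor $4$) combined with Lemma \ref{Tiger} (factor $n$), and then the linear growth estimate \eqref{66} followed by Gr\"onwall applied to $1+f(t)$, producing the constant $6(T+4n)$. One small point in your favour: you replace $t$ by $T$ in the Cauchy--Schwarz step before invoking Gr\"onwall, making the Gr\"onwall kernel $6(T+4n)\kk(s)$ genuinely independent of $t$; the paper keeps $6(t+4n)$ and applies Gr\"onwall somewhat loosely. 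Your explicit localization via the stopping times $\tau_k=\inf\{t:\|X(t)\|_{HS}\ge k\}$ before applying Gr\"onwall is also more careful than the paper, which relies implicitly on the a priori bound $X\in\H_{2,\lambda}$ (giving $\sup_t\E\|X(t)\|_{HS}^2<\infty$) to justify finiteness of $\E\sup_{s\le t}\|X(s)\|_{HS}^2$.
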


\begin{proof}
	The proof on existence and uniqueness of solutions to \eqref{Belfast1} is based on the classical Banach fixed point theorem,
		cf. e.g. \cite{TrumanWu}.  To end this,  we define the space
	\begin{equation*}
	\begin{split}
	\H_2=\Big\{u|&u:\R_+\rightarrow\R^n\otimes\R^n \mbox{ is predictable
		and such that }\\
	& \3u\3_{2}:= \sup_{t\in[0,T]} \Big(\mathbb{E}\lVert
	u(t)\rVert_{HS}^2\Big)^{\frac{1}{2}}<\infty\Big\},
	\end{split}
	\end{equation*}
	which is a Banach space.  Define the mapping $\Gamma:
	\H_2\rightarrow\R^n\otimes\R^n$ by
	\begin{equation*}
	\Gamma (u)(t)=x+\int _{0}^{t}b(s,u(s))ds+\int
	_{0}^{t}\sigma(s,u(s))dB(s),~~~~t>0,~~u\in\H_2.
	\end{equation*}
	One can show that $\Gamma: \H_2\rightarrow\H_2$ is contractive whenever $T>0$ is sufficiently small.  To overcome the drawback of the approach above,  for any $\lambda>0$ to be determined, we redefine the following space:
	\begin{equation*}
	\begin{split}
	\H_{2,\lambda}=\Big\{u|&u:\R_+\rightarrow\R^n\otimes\R^n \mbox{ is
		predictable
		and such that }\\
	& \3u\3_{2,\lambda}:=\sup_{t\in[0,T]} \Big(e^{-\lambda
		t}\Big(\mathbb{E}\lVert
	u(t)\rVert_{HS}^2\Big)^{\frac{1}{2}}\Big)<\infty\Big\},
	\end{split}
	\end{equation*}
	which is also a Banach space. Define the mapping $\Gamma:
	\H_{2,\lambda}\rightarrow\R^n\otimes\R^n$ by
	\begin{equation}\label{Pan}
	\Gamma (u)(t)=x+\int _{0}^{t}b(s,u(s))ds+\int
	_{0}^{t}\sigma(s,u(s))dB(s),~~~~u\in\H_{2,\lambda}.
	\end{equation}
	In the sequel, we show that, for $\lambda>0$ sufficiently large,  the mapping $\Gamma$ defined above possesses the following properties:
	\begin{enumerate}
		\item [i) ]$\G$ maps $\H_{2,\lambda}$ to $\H_{2,\lambda}$; 
		\item [ii)]$\3\G(u_1)-\G(u_2)\3_{2,\lambda} \leq \alpha \3u_1-u_2\3_{2,\lambda} $ for some $\alpha \in (0,1)$.
	\end{enumerate}
	If i) and ii) hold, respectively, then, according to the Banach fixed point theorem, the mapping  $\G:\H_{2,\lambda} \rightarrow \H_{2,\lambda}$ admits a unique fixed point $u\in \H_{2,\lambda}$, i.e., $\G(u)(t)=u(t), t\in[0,T].$ This, together with \eqref{Pan}, yields that \eqref{Belfast1} has a global unique strong solution $(X(t))_{t\ge0}.$ So, it remains to show that i) and ii) hold, step-by-step. We start to show the property  i). For any $u \in \H_{2,\lambda}$,  we deduce from \eqref{Pan} that
	\begin{equation}\label{Faraday}
	\begin{split}
	&\3\G(u)\3_{2,\lambda}^2\\&= \sup_{t\in[0,T]} (e^{-2\lambda t}\mathbb{E}\lVert \G (u)(t)\rVert_{HS}^2)\\
	&=\sup_{t\in[0,T]} \Big\{e^{-2\lambda t} \E\Big\lVert x+\int _{0}^{t}b(s,u(s))ds +\int _{0}^{t}\sigma(s,u(s))dB(s)
	\Big\rVert_{HS}^2    \Big\}\\
	&\le3\sup_{t\in[0,T]} (e^{-2\lambda t} \lVert x\rVert_{HS}^2)+3\sup_{t\in[0,T]}\Big(e^{-2\lambda t} \E \Big\lVert\int _{0}^{t}b(s,u(s))ds \Big\rVert_{HS}^2 \Big)\\
	&\quad+3\sup_{t\in[0,T]} \Big(e^{-2\lambda t} \E \Big\lVert \int _{0}^{t}\sigma(s,u(s))dB(s)\Big\rVert_{HS}^2 \Big)\\
	&=:I_1+I_2+I_3.
	\end{split}
	\end{equation}
	For the term $I_1$,  it follows that
	\begin{equation}\label{Faraday0}
	I_1 \leq 3\lVert x\rVert_{HS}^2 < \infty.
	\end{equation}
	With regard to  $I_2$, applying H\"older's inequality	and \eqref{66},  we derive  that
\begin{equation}\label{panpan}
\begin{split}
I_2 
&\leq 3\sup_{t\in[0,T]}\Big(e^{-2\lambda t} \int _{0}^{t}e^{2\lambda s}ds\E\int _{0}^{t}e^{-2\lambda s}\lVert b(s,u(s)) \rVert_{HS}^2ds\Big)\\
&\leq \frac{3}{2\lambda} \int _{0}^Te^{-2\lambda t}\E \lVert b(tu(t) \rVert_{HS}^2dt\\
&\leq \frac{3}{\lambda}\int _{0}^{T}e^{-2\lambda t}\kk(t)(1+\E \lVert u(t)\rVert_{HS}^2dt\\
&\leq\frac{3}{\lambda} (1+\3u\3_{2,\lambda}^2)\int
_{0}^{T}\kk(t)dt.
\end{split}
\end{equation}
Concerning  $I_3$ , by  Lemma \ref{Tiger}, and \eqref{66}, we infer that 
\begin{equation}\label{Faraday2}
\begin{split}
I_3 &=3n\sup_{t\in[0,T]} \Big(e^{-2\lambda t} \int _{0}^{t}\E \lVert \sigma(s,u(s)) \rVert_{HS}^2ds \Big)\\
&\leq 6n\sup_{t\in[0,T]}\Big(e^{-2\lambda t} \int _{0}^{t}\kk(s)(1+\E\lVert u(s) \rVert_{HS}^2ds\Big)\\
&\leq 6n\sup_{t\in[0,T]}\Big(\int _{0}^{t}\kk(s)e^{-2\lambda s} (1+\E\lVert u(s) \rVert_{HS}^2ds\Big)\\
&\leq6n(1+\3u\3_{2,\lambda}^2)\int _{0}^{T}\kk(t)dt.
\end{split}
\end{equation}
By means of $u\in \H_{2,\lambda}$ and $\int_{0}^{T}\kk(t)dt<\infty$, we therefore obtain from \eqref{Faraday}-\eqref{Faraday2} that i) holds. Now, we aim to verify that   ii) is  also satisfied.  For any $u_1,u_2 \in \H_2$, it follows from \eqref{Pan} that
\begin{align*}
\3\G(u_1)-\G(u_2)\3_{2,\lambda}^2
&\leq 2\sup_{t\in[0,T]}\Big(e^{-2\lambda t}\E\Big\lVert\int _{0}^{t}(b(s,u_1(s))-b(s,u_2(s)))ds\Big\lVert_{HS}^2\Big)\\
&\quad+2\sup_{t\in[0,T]}\Big(e^{-2\lambda t}\E\Big\lVert\int_{0}^{t}(\sigma(s,u_1(s))-\sigma(s,u_2(s)))dB(s)\Big\lVert_{HS}^2\Big)\\
&=:J_1+J_2.
\end{align*}
Applying  H\"older's inequality and taking  {(\bf{A1})} into account, we get
\begin{equation}\label{Lib1}
\begin{split}
J_1
&\leq 2\sup_{t\in[0,T]}\Big(e^{-2\lambda t}\int _{0}^{t}e^{2\lambda s}ds\int _{0}^{t}e^{-2\lambda s}\E\lVert (b(s,u_1(s))-b(s,u_2(s)))\lVert_{HS}^2ds\Big)\\
&\leq\frac{1}{\lambda}\int_{0}^{T} e^{-2\lambda t}\kk_1(t)\E\lVert (u_1(s)-u_2(s)\lVert_{HS}^2dt\\
&\leq \frac{1}{\lambda}\int_{0}^{T}
\kk_1(t)dt\3u_1-u_2\3_{2,\lambda}^2.
\end{split}
\end{equation}
Next,  by Lemma \ref{Tiger} and {(\bf{A1})}, enable us to obtain that
\begin{equation}\label{Lib2}
\begin{split}
J_2&=2n\sup_{t\in[0,T]}\Big(e^{-2\lambda t}\int _{0}^{t}\E\lVert \sigma(s,u_1(s))-\sigma(s,u_2(s))\lVert_{HS}^2ds\Big)\\
&\leq 2n\sup_{t\in[0,T]}\Big(e^{-2\lambda t}\int _{0}^{t}\kk_1(s)\E\lVert u_1(s)-u_2(s)\lVert_{HS}^2ds\Big)\\
&=2n\sup_{t\in[0,T]}\Big(e^{-2\lambda t}\int _{0}^{t}\kk_1(s)e^{2\lambda s}e^{-2\lambda s}\E\lVert u_1(s)-u_2(s)\lVert_{HS}^2ds\Big)\\
&\leq 2n\3u_1-u_2\3_{2,\lambda}^2\sup_{t\in[0,T]}\Big(\int _{0}^{t}\kk(s)e^{-2\lambda(t-s)
}ds\Big).
\end{split}
\end{equation}
As a consequence,  taking \eqref{Lib1} and \eqref{Lib2} into consideration, we derive that
\begin{equation}\label{Lib3}
\begin{split}
\3\G(u_1)-\G(u_2)\3_{2,\lambda}^2&\leq \alpha\3u_1-u_2\3_{2,\lambda}^2,
\end{split}
\end{equation}
in which
\begin{equation*}
\alpha:=\frac{1}{\lambda}\int_{0}^{T}
\kk_1(t)dt+2n\sup_{t\in[0,T]}\Big(\int _{0}^{t}\kk(s)e^{-2\lambda(t-s)
}ds\Big).
\end{equation*}
Choosing $\lambda>0$ sufficiently large  so that $\alpha\in(0,1)$, we infer from \eqref{Lib3} that  ii) holds.

Let $t\in [0,T]$ be arbitrary. By the inequality: $(a+b+c)^2\leq3\,(a^2+b^2+c^2)$ for any $ a,b,c\in \mathbb R$,
\begin{equation}\label{Push}
\begin{split}
\E\Big(\sup_{0\leq s\leq t} \lVert X(s)\rVert^2_{HS}\Big)
&\leq 3\lVert x\lVert_{HS}^2+3\E\Big(\sup_{0\leq s\leq t}\Big\lVert\int _{0}^{s}b(r,X(r))dr\Big\lVert_{HS}^2\Big)\\
&\quad +3\E\Big(\sup_{0\leq s\leq t}\Big\lVert\int_{0}^{s}\sigma(r,X(r))dB(r)\Big\lVert_{HS}^2\Big)\\
&=:3\lVert x\lVert_{HS}^2+\Xi_1(t)+\Xi_2(t).
\end{split}
\end{equation}
For the term $\Xi_1(t)$,  the H\"older inequality enables us to give that
\begin{equation}\label{Push1}
\Xi_1(t)\leq 3t\int_{0}^{t}\E\lVert b(s,X(s))\lVert_{HS}^2ds.
\end{equation}
As far as the term $\Xi_2(t)$ is concerned,  using Doob's sub-martingale inequality followed by Lemma \ref{Tiger}, we obtain that
\begin{equation}\label{Push2}
\begin{split}
\Xi_2(t)&\leq 12n\E\Big\lVert \int_{0}^{t}\sigma(s,X(s))dB(s)\Big\lVert_{HS}^2\\
&=12n\int_{0}^{t}\E\lVert \sigma(s,X(s))\lVert_{HS}^2ds.
\end{split}
\end{equation}
Putting   \eqref{Push1} and \eqref{Push2} into \eqref{Push} and applying \eqref{66} yields that
\begin{equation*}
\begin{split}
&\E\Big( \sup_{0\leq s\leq t} \lVert X(s)\rVert^2_{HS} \Big)\\
&\leq3\lVert x\lVert_{HS}^2+3(t+4n)\int_{0}^{t} \Big\{\E\lVert b(s,X(s))\lVert_{HS}^2+\E\lVert \sigma(s,X(s))\lVert_{HS}^2 \Big\}ds\\
&\leq3\lVert x\lVert_{HS}^2+6(t+4n)\int_{0}^{t}\kk(s)(1+\E\lVert X(s)\lVert_{HS}^2)ds.
\end{split}
\end{equation*}
This further implies that
\begin{equation*}\label{Push4}
\begin{split}
& 1+\E\Big( \sup_{0\leq s\leq t} \lVert X(s)\rVert^2_{HS} \Big)\\ 
&\leq1+3\lVert x\lVert_{HS}^2+6(t+4n)\int_{0}^{t}\kk(s)\Big\{1+\E\Big(\sup_{0\leq r\leq s}\lVert X(r)\rVert^2_{HS}\Big)\Big\}ds.
\end{split}
\end{equation*}
Thus, by  Gronwall's inequality one has
\begin{equation*}
1+\E\Big( \sup_{0\leq s\leq t} \lVert X(s)\rVert^2_{HS} \Big) \leq(1+3\lVert
x\lVert_{HS}^2)e^{6(t+4n)\int_{0}^{t}\kk(s)ds}.
\end{equation*}
Consequently, the proof is  complete by taking $t=T$.
\end{proof}

\section{The It\^o formula for $\R^n\otimes\R^n$-valued SDEs and an apllication}

In this section, we intend to derive an It\^o formula based on the solution of matrix valued SDE.  
Before proceeding, let us introduce gradient operator and second-order gradient operator for $V\in C^2(\R^n\otimes\R^n;\R)$, 
the set of all twice Fr\'{e}chet differentiable functions defined on $\R^n\otimes\R^n$ and taking value in $\R$. The  gradient of $V$, 
denoted by $\nabla V$, is defined by, for  $X=(X_{ij})_{n\times n}\in\R^n\otimes\R^n$,
\begin{equation}\label{Ren}
\nabla V(X)= \left(\begin{array}{cccc}
\frac{\partial}{\partial X_{11}}V(X)& \frac{\partial}{\partial X_{12}}V(X) &  \cdots & \frac{\partial}{\partial X_{1n}}V(X)\\
\frac{\partial}{\partial X_{21}}V(X)& \frac{\partial}{\partial X_{22}}V(X) &  \cdots & \frac{\partial}{\partial X_{2n}}V(X)\\
\cdots& \cdots &  \cdots &\cdots\\
\frac{\partial}{\partial X_{n1}}V(X)& \frac{\partial}{\partial X_{n2}}V(X) &  \cdots & \frac{\partial}{\partial X_{nn}}V(X)\\
\end{array}
\right) ,
\end{equation}
which obviously is an $n\times n$ matrix.
The second-order gradient, which is named as Hessian operator and denoted by   $\nabla^2 V$, is defined by
\begin{equation}\label{Ren1}
\begin{split}
\nabla^2 V(X)
&=\left(\begin{array}{cccc}
\nabla\Big(\frac{\partial}{\partial X_{11}}V(X)\Big)&  \nabla\Big(\frac{\partial}{\partial X_{12}}V(X)\Big)&\cdots & \nabla\Big(\frac{\partial}{\partial X_{1n}}V(X)\Big)\\
\nabla\Big(\frac{\partial}{\partial X_{21}}V(X)\Big)&  \nabla\Big(\frac{\partial}{\partial X_{22}}V(X)\Big)&\cdots & \nabla\Big(\frac{\partial}{\partial X_{2n}}VX)\Big)\\
\cdots&   \cdots &\cdots&\cdots\\
\nabla\Big(\frac{\partial}{\partial X_{n1}}V(X)\Big)&\nabla\Big(\frac{\partial}{\partial X_{n2}}V(X)\Big)&
\cdots & \nabla\Big(\frac{\partial}{\partial
	X_{nn}}V(X)\Big)
\end{array}
\right),
\end{split}
\end{equation}
where each entry \Big(i.e.,  $\nabla\Big(\frac{\partial}{\partial X_{ij}}V(X)\Big)$\Big) is an $n\times n$ matrix and is defined as in \eqref{Ren}. Moreover, we remark 
that  $\nabla^2 V(X)$ is an $n^2\times n^2$ matrix which could be treated as an $n\times n$ block matrix in which each entry is an $n\times n$ square matrix. So we 
would like to introduce block matrix multiplication for our later calculations. 
 
For $A^{ij}\in\R^n\otimes\R^n$ and $B_{ij}\in\R$ with $1\le i,j\le n$, let 
\begin{equation*}
A=\left(\begin{array}{cccc}
A^{11}&   A^{12}&\cdots & A^{1n}\\
A^{21}&   A^{22}&\cdots & A^{2n}\\
\cdots& \cdots &\cdots &  \cdots\\
A^{n1}&   A^{n2}&\cdots & A^{nn}\\
\end{array}
\right) ~~~\mbox{ and }~~~B=\left(\begin{array}{cccc}
B_{11}&   B_{12}&\cdots & B_{1n}\\
B_{21}&   B_{22}&\cdots & B_{2n}\\
\cdots& \cdots &\cdots &  \cdots\\
B_{n1}&   B_{n2}&\cdots & B_{nn}\\
\end{array}
\right) .
\end{equation*} 
We then define the Hadamard product of $A$ and $B$ as follows  

\begin{equation*}
A\circ B:=\left(\begin{array}{cccc}
A^{11}B_{11}&   A^{12}B_{12}&\cdots & A^{1n}B_{1n}\\
A^{21}B_{21}&   A^{22}B_{22}&\cdots & A^{2n}B_{2n}\\
\cdots& \cdots &\cdots &  \cdots\\
A^{n1}B_{n1}&   A^{n2}B_{n2}&\cdots & A^{nn}B_{nn}\\
\end{array}
\right) .
\end{equation*} 
and further we define  

\begin{equation*}
\begin{split}
A\bullet  B=trace(A\circ B)&=A^{11}B_{11}+A^{12}B_{21}+\cdots+A^{1n}B_{n1}\\
&\quad+\cdots\\
&\quad+A^{n1}B_{1n}+A^{n2}B_{2n}+\cdots+A^{nn}B_{nn}\\
&=\sum_{ij=1}^nA^{ij}B_{ij}.
\end{split}
\end{equation*}

The following lemma is concerned with the Taylor expansion of functions with matrix-valued arguments, which plays a crucial role in proving the It\^o formula below. 
\begin{lemma}\label{Taylor}
	Let $V\in C^2(\R^n\otimes\R^n;\R)$. Then, for any $X=(X_{ij})_{n\times n}\in\R^n\otimes\R^n$ and $Y=(Y_{ij})_{n\times n}\in\R^n\otimes\R^n$,
	\begin{equation}\label{b1}
	\begin{split}
	V(X)&=V(Y)+\<\nabla V(Y),X-Y\>_{HS}\\
	&\quad+\frac{1}{2}\<(\nabla^2 V(Y+s(X-Y)))^T\bullet(X-Y),X-Y\>_{HS}
	\end{split}
	\end{equation}
	for some $s\in(0,1)$.
\end{lemma}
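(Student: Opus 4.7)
The approach is to reduce the lemma to the classical one-variable Taylor theorem by slicing $V$ along the line segment from $Y$ to $X$. The plan is to introduce the auxiliary function
$$\phi(t):=V(Y+t(X-Y)),\qquad t\in[0,1],$$
observe that $V\in C^2(\mathbb{R}^n\otimes\mathbb{R}^n;\mathbb{R})$ together with the affine dependence of the argument on $t$ forces $\phi\in C^2([0,1])$, and then apply the scalar Taylor expansion with Lagrange remainder at second order:
$$\phi(1)=\phi(0)+\phi'(0)+\frac{1}{2}\phi''(s)$$
for some $s\in(0,1)$. Since $\phi(1)=V(X)$ and $\phi(0)=V(Y)$, proving \eqref{b1} reduces to identifying $\phi'(0)$ and $\phi''(s)$ with the two expressions on the right-hand side of \eqref{b1}.

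For the linear term, the chain rule applied entry-by-entry gives
$$\phi'(t)=\sum_{i,j=1}^{n}\frac{\partial V}{\partial X_{ij}}(Y+t(X-Y))\,(X-Y)_{ij}.$$
Matching this against the definition of $\nabla V$ in \eqref{Ren} and the formula $\langle A,B\rangle_{HS}=\sum_{i,j}A_{ij}B_{ij}$ (which follows from \eqref{Huaxianzi0} by expanding $\mathrm{trace}(A^T B)$), I would recognise the right-hand side as $\langle\nabla V(Y+t(X-Y)),X-Y\rangle_{HS}$, and then specialise to $t=0$ to produce the required first-order term.

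For the quadratic term, differentiating once more yields
$$\phi''(t)=\sum_{i,j,k,l=1}^{n}\frac{\partial^2 V}{\partial X_{kl}\,\partial X_{ij}}(Y+t(X-Y))\,(X-Y)_{ij}(X-Y)_{kl},$$
and at $t=s$ this must be rewritten as $\langle(\nabla^2 V(Y+s(X-Y)))^{T}\bullet(X-Y),X-Y\rangle_{HS}$. Conceptually, the block structure of $\nabla^2 V$ in \eqref{Ren1} encodes the four-index tensor of second partials; the block transpose rearranges the outer index pair; the $\bullet$-contraction against $X-Y$ collapses one pair of indices into a single $n\times n$ matrix; and the final Hilbert--Schmidt inner product contracts the remaining pair against $X-Y$.

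The main obstacle I foresee is precisely this last indexing verification. The block transpose, the block-valued Hadamard product $\circ$, and the $\bullet$-contraction introduced just before the statement of the lemma intertwine in a slightly subtle way, so one must be careful to apply them in exactly the prescribed order so that the four summation indices pair correctly with the four tensor factors. Once that bookkeeping is carried out, no further analytic input is needed: the lemma follows immediately from the scalar Taylor theorem applied to $\phi$, together with the chain-rule computations of $\phi'$ and $\phi''$.
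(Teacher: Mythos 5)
Your proposal is correct and follows exactly the paper's route: introduce $\phi(t)=V(Y+t(X-Y))$, apply the scalar Taylor theorem with Lagrange remainder, and identify $\phi'$ and $\phi''$ via the chain rule with the Hilbert--Schmidt and block-contraction expressions. The only difference is that you leave the index bookkeeping for the quadratic term as a foreseen obstacle, whereas the paper actually carries it out explicitly (for $n=2$, invoking the symmetry of mixed partials and the block-transpose/$\bullet$ conventions), so your sketch would need that verification appended to be complete.
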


\begin{proof}
For the sake of brevity, 
we here just show \eqref{b1} holds true for the case  $n=2$ since the case $n>2$ can be done similarly. In the sequel, we assume $V\in C^2(\R^n\otimes\R^n;\R)$. For any   $X=(X_{ij})_{2\times 2}\in\R^2\otimes\R^2$ and $Y=(Y_{ij})_{2\times 2}\in\R^2\otimes\R^2$, set 
	\begin{equation*}
	\phi(s):=V(Y+s(X-Y)),~~~~s\in[0,1].
	\end{equation*}
	Note that $\phi(1)=V(X)$ and $\phi(0)=V(Y)$. By the Taylor expansion, one has
	\begin{equation}\label{b2}
	\phi(1)=\phi(0)+\phi'(0)+\frac{1}{2}\phi''(\xi)
	\end{equation}
	for some $\xi\in(0,1)$. In what follows, we are going to calculate $\phi'(s)$ and $\phi''(s)$ for $s\in[0,1].$  For $Z_s:=Y+s(X-Y), s\in[0,1]$, a straightforward calculation shows that 
	\begin{equation}\label{b3}
	\begin{split}
	\phi'(s)&=(X_{11}-Y_{11})\frac{\partial}{\partial X_{11}}V(Z_s)+(X_{12}-Y_{12})\frac{\partial}{\partial X_{12}}V(Z_s)\\
	&\quad+(X_{21}-Y_{21})\frac{\partial}{\partial X_{21}}V(Z_s)+(X_{22}-Y_{22})\frac{\partial}{\partial X_{22}}V(Z_s)\\
\end{split}
\end{equation}
From \eqref{Huaxianzi0} and \eqref{Ren} , we further have 
	\begin{equation}\label{bb}
	\begin{split}
\phi'(s)	&=\mbox{trace}\left(\left(\begin{array}{cc}
	\frac{\partial}{\partial X_{11}}V(Z_s)& \frac{\partial}{\partial X_{21}}V(Z_s)\\
	\frac{\partial}{\partial X_{12}}V(Z_s)&
	\frac{\partial}{\partial
		X_{22}}V(Z_s)
	\end{array}
	\right)\left(\begin{array}{cc}
	X_{11}-Y_{11} & X_{12}-Y_{12}\\
	X_{21}-Y_{21} & X_{22}-Y_{22}\\
	\end{array}
	\right)\right)\\
	&=\mbox{trace}\Big((\nabla V(Z_s))^T(X-Y)\Big)\\
	&=\<\nabla V(Z_s),X-Y\>_{HS}.
	\end{split}
	\end{equation}
	Next, following the procedure to derive \eqref{b3}, we deduce that 
	\begin{equation}\label{b5}
	\begin{split}
	\phi''(s)&=(X_{11}-Y_{11})A_{11}+(X_{12}-Y_{12})A_{12}+(X_{21}-Y_{21})A_{21}\\
	&\quad+(X_{22}-Y_{22})A_{22}\\
	&=\mbox{trace}\left(\left(\begin{array}{cc}
	A_{11}& A_{21}\\
	A_{12}& A_{22}\\
	\end{array}
	\right)\left(\begin{array}{cc}
	X_{11}-Y_{11} & X_{12}-Y_{12}\\
	X_{21}-Y_{21} & X_{22}-Y_{22}\\
	\end{array}
	\right)\right),\\
	\end{split}
	\end{equation}
	where
	\begin{align*}
	A_{11}:&=(X_{11}-Y_{11})\frac{\partial^2}{\partial X_{11}^2}V(Z_s)+(X_{12}-Y_{12})\frac{\partial^2}{\partial X_{11}\partial X_{12}}V(Z_s)\\
	&\quad+(X_{21}-Y_{21})\frac{\partial^2}{\partial X_{11}\partial X_{21}}V(Z_s)+(X_{22}-Y_{22})\frac{\partial^2}{\partial X_{11}\partial X_{22}}V(Z_s)\\
	A_{12}:&=(X_{11}-Y_{11})\frac{\partial^2}{\partial X_{11}\partial X_{12}}V(Z_s)+(X_{12}-Y_{12})\frac{\partial^2}{\partial X_{12}^2}V(Z_s)\\
	&\quad+(X_{21}-Y_{21})\frac{\partial^2}{\partial X_{12}\partial X_{21}}V(Z_s)+(X_{22}-Y_{22})\frac{\partial^2}{\partial X_{12}\partial X_{22}}V(Z_s)\\
	A_{21}:&=(X_{11}-Y_{11})\frac{\partial^2}{\partial X_{11}\partial X_{21}}V(Z_s)+(X_{12}-Y_{12})\frac{\partial^2}{\partial X_{12}\partial X_{21}}V(Z_s)\\
	&\quad+(X_{21}-Y_{21})\frac{\partial^2}{\partial X_{21}^2}V(Z_s)+(X_{22}-Y_{22})\frac{\partial^2}{\partial X_{22}\partial X_{21}}V(Z_s)\\
	A_{22}:&=(X_{11}-Y_{11})\frac{\partial^2}{\partial X_{11}\partial X_{22}}V(Z_s)+(X_{12}-Y_{12})\frac{\partial^2}{\partial X_{12}\partial X_{22}}V(Z_s)\\
	&\quad+(X_{21}-Y_{21})\frac{\partial^2}{\partial X_{21}\partial X_{22}}V(Z_s)+(X_{22}-Y_{22})\frac{\partial^2}{\partial X_{22}^2}V(Z_s).
	\end{align*}
Observe that 
		\begin{align*}
	\left(\begin{array}{cc}
	A_{11}& A_{21}\\
	A_{12}& A_{22}\\
	\end{array}
	\right)&=	(X_{11}-Y_{11})\left(\begin{array}{cc}
\frac{\partial^2}{\partial X_{11}^2}V(Z_s)& \frac{\partial^2}{\partial X_{11}\partial X_{21}}V(Z_s)\\
	\frac{\partial^2}{\partial X_{11}\partial X_{12}}V(Z_s)& \frac{\partial^2}{\partial X_{11}\partial X_{22}}V(Z_s)\\
	\end{array}
	\right)\\
	&\quad+	(X_{12}-Y_{12})\left(\begin{array}{cc}
\frac{\partial^2}{\partial X_{11}\partial X_{12}}V(Z_s)& \frac{\partial^2}{\partial X_{12}\partial X_{21}}V(Z_s)\\
\frac{\partial^2}{\partial X_{12}^2}V(Z_s)& \frac{\partial^2}{\partial X_{12}\partial X_{22}}V(Z_s)\\
	\end{array}
	\right)\\
	&\quad+(X_{21}-Y_{21})\left(\begin{array}{cc}
\frac{\partial^2}{\partial X_{11}\partial X_{21}}V(Z_s)&\frac{\partial^2}{\partial X_{21}^2}V(Z_s)\\
\frac{\partial^2}{\partial X_{12}\partial X_{21}}V(Z_s)& \frac{\partial^2}{\partial X_{21}\partial X_{22}}V(Z_s)\\
	\end{array}
	\right)\\
	&\quad+		(X_{22}-Y_{22})\left(\begin{array}{cc}
\frac{\partial^2}{\partial X_{11}\partial X_{22}}V(Z_s)& \frac{\partial^2}{\partial X_{22}\partial X_{21}}V(Z_s)\\
\frac{\partial^2}{\partial X_{12}\partial X_{22}}V(Z_s)&\frac{\partial^2}{\partial X_{22}^2}V(Z_s)\\
	\end{array}
	\right)\\
	&=\Big(\nabla\Big(\frac{\partial}{\partial X_{11}}\Big)\Big)^T(X_{11}-Y_{11})+\Big(\nabla\Big(\frac{\partial}{\partial X_{12}}\Big)\Big)^T(X_{12}-Y_{12})\\
	&\quad+\Big(\nabla\Big(\frac{\partial}{\partial X_{21}}\Big)\Big)^T(X_{21}-Y_{21})+\Big(\nabla\Big(\frac{\partial}{\partial X_{22}}\Big)\Big)^T(X_{22}-Y_{22})\\
	&=\left(\begin{array}{cc}
	\nabla\Big(\frac{\partial}{\partial X_{11}}V(Z_s)\Big)&   \nabla\Big(\frac{\partial}{\partial X_{12}}V(Z_s)\Big)\\
	\nabla\Big(\frac{\partial}{\partial X_{21}}V(Z_s)\Big)&   \nabla\Big(\frac{\partial}{\partial X_{22}}V()Z_s)\Big)\\
	\end{array}
	\right)^T\bullet\left(\begin{array}{cc}
	X_{11}-Y_{11} & X_{12}-Y_{12}\\
	X_{21}-Y_{21} & X_{22}-Y_{22}\\
	\end{array}
	\right)\\
	&=\Big(\nabla^2V(Z_s)\Big)^T\bullet(X-Y).
		\end{align*}
By substituting this into \eqref{b5} and combining with\eqref{b2} and \eqref{b3}, thus \eqref{b1} follows immediately. 
\end{proof}

We are now in the position to show It\^o formula the solutions of the SDE \eqref{Belfast1}, i.e. 
$$dX(t)=b(t,X(t))dt+\sigma(t,X(t))dB_t ,~~~~t>0,~~~~~X_0=x\in
\mathbb{R}^{n}\otimes \mathbb{R}^{n}. $$ 
We have the following 

\begin{theorem} \label{Itoformula}
	Assume that $V\in C^{1,2}(\mathbb{R}_{+}\times\mathbb{R}^{n}\otimes \mathbb{R}^{n};\R)$ and $\frac{\partial}{\partial t}V$, $\nabla V$
	and $\nabla^2V$ are continuous on bounded sets of $\R^n\otimes\R^n$. Suppose further that 
	 $(X(t))_{t\ge0}$ be the strong solution of \eqref{Belfast1}. Then,
	\begin{equation}\label{cup}
	\begin{split}
	dV(t,X(t))
	&=\frac{\partial}{\partial t}V(t, X(t)) d t+\<\nabla V(t,X(t)),b(t,X(t))\>_{HS}dt\\
	&\quad+\<\nabla V(t,X(t)),\sigma(t,X(t))dB_t\>_{HS}\\
	&\quad+\frac{1}{2}\mbox{trace}\bigg(\sigma^T(t,X(t))\Big((\nabla^2V(t,X(t)))^T\bullet\sigma(t,X(t))\Big)\bigg).
	\end{split}
	\end{equation}
\end{theorem}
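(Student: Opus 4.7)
The plan is to establish \eqref{cup} via the classical telescoping-plus-Taylor argument, with the spatial Taylor expansion coming from Lemma~\ref{Taylor}. Fix $t\in[0,T]$ and a partition $0 = t_0 < t_1 < \cdots < t_m = t$ of $[0,t]$ with mesh tending to zero. I would first write the telescoping identity
\begin{equation*}
V(t,X(t)) - V(0,x) = \sum_{i=0}^{m-1}\bigl\{V(t_{i+1}, X(t_{i+1})) - V(t_i, X(t_i))\bigr\},
\end{equation*}
and split each summand as a time increment $V(t_{i+1}, X(t_i)) - V(t_i, X(t_i))$ plus a space increment $V(t_{i+1}, X(t_{i+1})) - V(t_{i+1}, X(t_i))$. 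A one-variable Taylor expansion of $s\mapsto V(s,X(t_i))$ handles the time increment and contributes $\frac{\partial}{\partial s}V(t_i, X(t_i))\Delta t_i$ plus lower-order terms, whose sum converges to $\int_0^t \frac{\partial}{\partial s}V(s,X(s))ds$. For the space increment I would apply Lemma~\ref{Taylor} at the frozen time $t_{i+1}$, producing a first-order piece $\<\nabla V(t_{i+1}, X(t_i)), \Delta X_i\>_{HS}$ and a quadratic piece $\tfrac{1}{2}\<(\nabla^2 V(t_{i+1}, \xi_i))^T\bullet \Delta X_i, \Delta X_i\>_{HS}$, where $\Delta X_i := X(t_{i+1}) - X(t_i)$ and $\xi_i$ lies on the segment between $X(t_i)$ and $X(t_{i+1})$.

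Using the SDE $\Delta X_i = \int_{t_i}^{t_{i+1}} b(s,X(s))ds + \int_{t_i}^{t_{i+1}} \sigma(s,X(s))dB(s)$, standard $L^2$ arguments leveraging the It\^o isometry (Lemma~\ref{Tiger}) and the definition \eqref{app2} show that the first-order spatial sum converges to
\begin{equation*}
\int_0^t \<\nabla V(s,X(s)), b(s,X(s))\>_{HS}ds + \int_0^t \<\nabla V(s,X(s)), \sigma(s,X(s))dB(s)\>_{HS}.
\end{equation*}
Within the quadratic piece, cross contributions of type $\Delta t_i\,\Delta B_i$ and pure $(\Delta t_i)^2$ vanish in $L^1$ by elementary estimates, so the remaining task is to identify the limit of
\begin{equation*}
\frac{1}{2}\sum_{i=0}^{m-1}\<(\nabla^2 V(t_{i+1},\xi_i))^T \bullet (\sigma_i \Delta B_i), \sigma_i\Delta B_i\>_{HS}, \qquad \sigma_i := \sigma(t_i,X(t_i)).
\end{equation*}

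The key step is to expand $\Delta B_i = \sum_{k,l} \Delta B_i^{(kl)} e_{kl}$, unravel the block-Hadamard operation $\bullet$ defined just before Lemma~\ref{Taylor}, and invoke the independence of the scalar components $\{B^{(kl)}\}$ exactly as in \eqref{Belfast} and \eqref{niu1}: after taking conditional expectations, only the diagonal $(k,l)=(k',l')$ contributions survive, each carrying weight $\Delta t_i$. These reassemble into $\mathrm{trace}\bigl(\sigma_i^T((\nabla^2 V(t_{i+1},\xi_i))^T\bullet \sigma_i)\bigr)\Delta t_i$ by the same type of computation underlying \eqref{app9}, and passing the mesh to zero yields the trace term in \eqref{cup}. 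To drop the implicit boundedness hypotheses on $X$ and on the derivatives of $V$, a standard localization via the stopping times $\tau_k := \inf\{s\ge 0 : \|X(s)\|_{HS}\ge k\}$, with the $L^2$ estimate from Theorem~\ref{push} controlling the explosion probability, concludes the proof. I expect the main obstacle to be the rigorous identification of the quadratic limit: the block-Hadamard structure of $\bullet$ combined with the matrix-indexed expansion of $\Delta B_i$ makes the combinatorics substantially more intricate than in the classical vector-valued It\^o formula, and one must carefully justify that the $L^2$ convergence of the quadratic sum is uniform along the partition.
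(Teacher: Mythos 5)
Your proposal is correct and rests on the same pillars as the paper's proof: the telescoping decomposition into a time increment plus a space increment, Lemma~\ref{Taylor} for the spatial Taylor expansion, the independence of the scalar Brownian components (as in \eqref{Belfast} and \eqref{niu1}) to identify the quadratic limit with the trace term, and a stopping-time localization to remove boundedness assumptions. The one genuine difference is in how the general coefficients are handled. The paper proceeds in stages: it first establishes the formula for the constant-coefficient equation $dX(t) = b_0\,dt + \sigma_0\,dB_t$, then extends to simple $b,\sigma$, and finally passes to general $b,\sigma\in\mu^2_T(\R^n\otimes\R^n)$ by the approximation provided in Lemma~\ref{app}. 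You instead work directly with the increments $\Delta X_i = \int_{t_i}^{t_{i+1}} b(s,X(s))\,ds + \int_{t_i}^{t_{i+1}} \sigma(s,X(s))\,dB(s)$ of the general solution, and freeze $\sigma$ at $t_i$ in the quadratic sum. Both routes are sound. The staged approach isolates the delicate quadratic-variation computation on the cleanest possible data and then defers the remaining difficulties to a single standard $L^2$-approximation step, while your direct approach is shorter but pushes the burden onto each convergence claim, since you must argue (using Lemma~\ref{Tiger} and continuity of $\sigma$) that replacing $\int_{t_i}^{t_{i+1}}\sigma\,dB$ by $\sigma_i\Delta B_i$ contributes only $o(1)$ as the mesh vanishes. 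You are right to flag the quadratic-limit combinatorics as the crux; the computation you outline is precisely the one carried out in the paper for constant $\sigma_0$, and it transfers once that freezing error is controlled.
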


\begin{proof}
We first assume that the stochastic process $(X(t))_{t\in[0,T]}$, $\int_0^T\|b(t,X(t))\|_{HS}d t$ and $\int_0^T\|\sigma(t,X(t))\|_{HS}^2d t$ are bounded.  To begin, we  show \eqref{cup} for the following $\mathbb{R}^{n}\otimes
	\mathbb{R}^{n}$-valued SDE
	\begin{equation}\label{cup1}
	dX(t)=b_0dt+\sigma_0dB_t ,~~~~t>0,~~~~~X_0=x\in\mathbb{R}^{n}\otimes
	\mathbb{R}^{n},
	\end{equation}
	where $b_0,\sigma_0\in \mathbb{R}^{n}\otimes \mathbb{R}^{n}$. Let
	$0=t_0< t_1<\cdots < t_{k-1}<t_k=t$ be a partition of $[0,t]$. Then
	we get that
	\begin{equation}\label{cup2}
	\begin{split}
	V(t,X(t))-V(0,X(0))
	&=\sum_{j=0}^{k-1}\Big(V(t_{j+1},X(t_{j+1}))-V(t_j,X(t_j))\Big)\\
	&=\sum_{j=0}^{k-1}\Big(V(t_{j+1},X(t_{j+1}))-V(t_j,X(t_{j+1}))\Big)\\
	&\quad-\sum_{j=0}^{k-1}\Big(V(t_j,X(t_{j+1}))-V(t_j,X(t_j))\Big)\\
	&=:I_1+I_2.
	\end{split}
	\end{equation}
	
	By the Taylor expansion, we deduce that
	\begin{equation}\label{cup3}
	\begin{split}
	I_1
	&=\sum_{j=0}^{k-1}\frac{\partial}{\partial t}V( \tilde{t}_j,X(t_{j+1}))\Delta t_j\\
	&=\sum_{j=0}^{k-1}\frac{\partial}{\partial t}V(t_{j+1},X(t_{j+1}))\Delta t_j\\
	&\quad+\sum_{j=0}^{k-1}\Big\{\frac{\partial}{\partial t}V(\tilde{t}_j,X(t_{j+1}))-\frac{\partial}{\partial t}V(t_{j+1},X(t_{j+1}))\Big\}\Delta t_j\\
	&=:J_1+J_2,
	\end{split}
	\end{equation}
	where $\tilde{t}_j:=t_j+\theta_{0j}\Delta t_j $ with $\Delta t_j:=t_{j+1}-t_j $ for some random variable $\theta_{0j}\in (0,1), j=0, \cdots, k-1.$ Let $\Delta t=\max_{j=0,1,\cdots,k-1}\{\Delta t_j\}$. By the
	definition of Riemann integral, note that $\mathbb{P}-a.s$
	$$J_1\rightarrow \int_{0}^{t}\frac{\partial}{\partial s}V(s,X(s))ds$$
	as $\Delta t\rightarrow 0$. Next, due to the uniform continuity of $\frac{\partial}{\partial t}V$ w.r.t the first variable, we have
	$$J_2\rightarrow 0~~~~~~~~~~~\mathbb{P}-a.s$$
	as $\Delta t\rightarrow0$. 
	Next, applying Lemma \ref{Taylor}
	to deduce that
	\begin{equation*} 
	\begin{split}
	I_2
	&=\sum_{j=0}^{k-1}\Big\<\nabla V(t_j,X(t_j)),\Delta X(t_j)\Big\>_{HS}\\
	&\quad+\frac{1}{2}\sum_{j=0}^{k-1}\Big\<\Big(\nabla^2V(t_j,\tilde{X}(t_j))\Big)^T\bullet \Delta X(t_j),\Delta X(t_j)\Big\>_{HS}\\
	&=\sum_{j=0}^{k-1}\Big\<\nabla V(t_j,X(t_j)),\Delta X(t_j)\Big\>_{HS}\\
	&\quad+\frac{1}{2}\sum_{j=0}^{k-1}\Big\<\Big(\nabla^2 V(t_j,X(t_j))\Big)^T\bullet \Delta X(t_j),\Delta X(t_j)\Big\>_{HS}\\
	&\quad+\frac{1}{2}\sum_{j=0}^{k-1}\Big\<\Big(\nabla^2V(t_j,\tilde{X}(t_j)-\nabla^2 V(t_j,X(t_j))\Big)^T\bullet \Delta X(t_j),\Delta X(t_j)\Big\>_{HS}\\
	&=:\Lambda_1+\Lambda_2+\Lambda_3.
	\end{split}
	\end{equation*}
	From \eqref{cup1}, it follow that
	\begin{equation}\label{cup7}
	\Delta X(t_j)=b_0\Delta t_j+\sigma_0 \Delta B(t_j)
	\end{equation}
	where $\Delta B(t_j):=B(t_{j+1})-B(t_j)$. Now, taking
	\eqref{cup7} into consideration, we infer that
	\begin{equation*}
	\begin{split}
\Lambda_1
	&=\sum_{j=0}^{k-1}\<\nabla V(t_j,X(t_j)), b_0\>_{HS}\Delta
	t_j\\
	&\quad+\sum_{j=0}^{k-1}\<\nabla V(t_j,X(t_j)), \sigma_0 \Delta
	B(t_j)\>_{HS}
	\end{split}
	\end{equation*}
	and that
	\begin{align*}\label{cup9}
	\Lambda_2
	&=\frac{1}{2}\sum_{j=0}^{k-1}\Big\<\Big(\nabla^2 V(t_j, X(t_j))\Big)^T\bullet b_0, b_0\Big\>_{HS}(\Delta t_j)^2\\
	&\quad+\frac{1}{2}\sum_{j=0}^{k-1}\Big\<\Big(\nabla^2 V(t_j, X(t_j))\Big)^T\bullet b_0,\sigma_0 \Delta B(t_j)\Big\>_{HS}\Delta t_j\\
	&\quad+\frac{1}{2}\sum_{j=0}^{k-1}\Big\<\Big(\nabla^2 V(t_j, X(t_j))\Big)^T\bullet \sigma_0 \Delta B(t_j), b_0\Big\>_{HS}\Delta t_j\\
	&\quad+\frac{1}{2}\sum_{j=0}^{k-1}\Big\<\Big(\nabla^2 V(t_j, X(t_j))\Big)^T\bullet \sigma_0 \Delta B(t_j), \sigma_0 \Delta B(t_j)\Big\>_{HS}\\
	&=:\Lambda_{11}+\Lambda_{12}+\Lambda_{13}+\Lambda_{14}.
	\end{align*}
	As $\Delta t\rightarrow0 $, we obtain that
	$\Lambda_{11}+\Lambda_{12}+\Lambda_{13}\rightarrow0,~\mathbb{P}-a.s$
	and that
	$$
	\Lambda_2\rightarrow	\Lambda_{14}\rightarrow
	\frac{1}{2}\int_{0}^{t}\mbox{trace}\bigg(\sigma_0^T\Big(\Big(\nabla
	^2V(s,X(s)\Big)^T\bullet\sigma_0)\Big)\bigg)ds~~~~\mathbb{P}-\mbox{a.s.}
	$$
	Moreover, 
 we have
	$$\Lambda_2\rightarrow 0~~~~\mathbb{P}-\mbox{a.s.}$$
	So, we conclude that \eqref{cup} holds for equation\eqref{cup1}.
	Also It\^{o}'s formula holds when $b$ and $\sigma$ are simple
	stochastic process. If $b,\sigma\in\mu^2_T(\mathbb{R}^n\otimes \mathbb{R}^{n})$,
	according to Lemma \ref{app},
	there exist   sequences of simple  processes $b_k$ and
	$\sigma_k$  such that 
	$$
	\int_{0}^{t}\E\lVert b(s,X(s))-b_k(s)\rVert^2_{HS}ds+
	\int_{0}^{t}\E\lVert \sigma(s,X(s))-\sigma_k(s)\rVert^2_{HS}ds\rightarrow0
	$$
	as $k\rightarrow \infty.$ Then, 
	\eqref{cup} still holds by a standard approximation argument. 
	For the general case, we adopt the standard stopping time approach.

\end{proof}

As an application of the It\^o formula, let us present anther set of sufficient conditions for the existence and 
uniqueness of the SDE \eqref{Belfast1}.  More precisely, we replace  {\bf (A1)} and {\bf (A2)} by the following local Lipschitz condition and the monotone condition.
\begin{enumerate}
	\item[({\bf H1})] For all $t\in[0,T],R>0$ and $x,y\in\mathbb{R}^{n}\otimes \mathbb{R}^{n}$ with $\lVert x\lVert_{HS}\vee\lVert y\lVert_{HS}\leq R
	$, there exists a non-decreasing integrable  function $\kk^{(R)}_\cdot:[0,T]\rightarrow\R_+$ such that
	$$
	\lVert b(t,x)-b(t,y)\rVert^2_{HS}+\lVert\sigma(t,x)-\sigma(t,y)\rVert^2_{HS} \leq \kk_t^{(R)}\lVert
	x-y\rVert^2_{HS}.
	$$
	
	\item[({\bf H2})]  For any $t\in[0,T]$ and $x\in\mathbb{R}^{n}\otimes
	\mathbb{R}^{n}$, there exists an integrable function $\kk:[0,T]\rightarrow\R_+$
	$$
	2\langle x, b(t,x)\rangle_{HS}+\lVert\sigma(t,x)\rVert^2_{HS}\leq \kk_t(1+\lVert x\rVert^2_{HS}).
	$$
\end{enumerate}
From \eqref{66}, in addition to Schwartz's inequality and the inequality: $2ab\le a^2+b^2, a,b\in\R,$ we find that
\begin{equation*}
\begin{split}
&2\langle x, b(t,x)\rangle_{HS}+\lVert\sigma(t,x)\rVert^2_{HS}\\
&\le \|x\|_{HS}^2+\|b(t,x)\|_{HS}^2+\|\sigma(t,x)\|_{HS}^2\\
&\le  (1+2\kk(t))(1+\|x\|_{HS}^2).
\end{split}
\end{equation*}
So, {\bf (A1)} and {\bf (A2)} implies the monotone condition  {\bf(H2)}.

\begin{theorem}\label{pull}
	Under ${(\bf{A2})}$, ${(\bf{H1})} $and ${(\bf{H2})}$, \eqref{Belfast1} has a unique global strong solution $(X(t))_{t\geq0}$. Furthermore, if there exists an integrable function $\kk_0:[0,T]\rightarrow\R_+$ such that
	\begin{equation}\label{666}
	\lVert\sigma(t,x)\rVert^2_{HS}\le\kappa_0(t)(1+\|x\|_{HS}^2),
	\end{equation}
	then
	\begin{equation}\label{777}
	\E\Big( \sup_{0\leq t\leq T}\lVert X(t)\rVert^2_{HS} \Big)\leq(
	1+2\lVert x\rVert^2_{HS}) e^{2\int_0^T(\kk_t+64\kk_0(t))d t}.
	\end{equation}
	
\end{theorem}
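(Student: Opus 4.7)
The plan is to combine a truncation-and-localization argument (which yields existence and uniqueness under {\bf (H1)} and {\bf (H2)}) with an It\^o-formula-plus-Gronwall computation (which yields the moment bound \eqref{777} under the additional hypothesis \eqref{666}).

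For the existence step I would truncate the coefficients. For each integer $R\ge1$, let $\pi_R:\R^n\otimes\R^n\to\R^n\otimes\R^n$ be the Lipschitz retraction onto $\{y:\|y\|_{HS}\le R\}$ (e.g. $\pi_R(x)=x$ when $\|x\|_{HS}\le R$ and $\pi_R(x)=Rx/\|x\|_{HS}$ otherwise), and set $b^{(R)}(t,x):=b(t,\pi_R(x))$, $\sigma^{(R)}(t,x):=\sigma(t,\pi_R(x))$. By {\bf (H1)} applied on the ball of radius $R$, combined with {\bf (A2)}, these truncated coefficients satisfy the global Lipschitz and linear-growth conditions {\bf (A1)}--{\bf (A2)} (with $\kk_1(t)=\kk^{(R)}_t$), so Theorem \ref{push} yields a unique global strong solution $X^{(R)}$ of the truncated SDE. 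Define $\tau_R:=\inf\{t\in[0,T]:\|X^{(R)}(t)\|_{HS}\ge R\}\wedge T$. The uniqueness of Theorem \ref{push} forces $X^{(R)}=X^{(R')}$ on $[0,\tau_R\wedge\tau_{R'}]$ whenever $R\le R'$, so we may patch to obtain a process $X$ solving \eqref{Belfast1} on $[0,\tau_\infty)$ with $\tau_\infty:=\sup_R\tau_R$.

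The crucial step is non-explosion, $\tau_\infty=T$ almost surely, which uses {\bf (H2)}. Apply the It\^o formula of Theorem \ref{Itoformula} to $V(x):=1+\|x\|_{HS}^2$, for which $\nabla V(x)=2x$, and observe that the block-Hadamard second-order contraction in \eqref{cup} reduces to a numerical multiple of $\|\sigma\|_{HS}^2$. Stopping at $\tau_R$ to kill the stochastic integral in expectation and invoking {\bf (H2)} (the leftover $\|\sigma\|^2$ from the second-order term being reabsorbed using {\bf (H2)} together with a Cauchy--Schwarz splitting $2|\langle X,b\rangle_{HS}|\le \|X\|_{HS}^2+\|b\|_{HS}^2$ and the linear-growth-at-the-origin bound from {\bf (A2)}) yields
\begin{equation*}
\E\|X(t\wedge\tau_R)\|_{HS}^2\le\|x\|_{HS}^2+\int_0^t\tilde\kk_s\bigl(1+\E\|X(s\wedge\tau_R)\|_{HS}^2\bigr)ds
\end{equation*}
for an integrable $\tilde\kk$. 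Gronwall gives a finite bound uniform in $R$, and Chebyshev then forces $\mathbb{P}(\tau_R<T)\le R^{-2}\E\|X(\tau_R)\|_{HS}^2\mathbf{1}_{\{\tau_R<T\}}\to 0$, so $\tau_\infty=T$ a.s. Uniqueness on $[0,T]$ follows by localizing any two candidate solutions at stopping times where both remain in a ball of radius $R$, applying {\bf (H1)} together with It\^o's formula and Gronwall to $\|X^1-X^2\|_{HS}^2$, and then sending $R\to\infty$.

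For the quantitative estimate \eqref{777}, \eqref{666} makes the computation clean. Setting $Y(t):=1+\|X(t)\|_{HS}^2$, It\^o's formula yields
\begin{equation*}
Y(t)=Y(0)+\int_0^t\bigl(2\langle X(s),b(s,X(s))\rangle_{HS}+c\,\|\sigma(s,X(s))\|_{HS}^2\bigr)ds+M(t),
\end{equation*}
where $c$ is the constant produced by the second-order term of Theorem \ref{Itoformula} and the martingale $M(t):=2\int_0^t\langle X(s),\sigma(s,X(s))dB_s\rangle_{HS}$ satisfies $[M]_t\le 4\int_0^t\|X(s)\|_{HS}^2\|\sigma(s,X(s))\|_{HS}^2ds$. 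Using {\bf (H2)} followed by \eqref{666} bounds the drift integrand by $(\kk_s+c'\kk_0(s))Y(s)$. Taking $\sup_{s\le t}$ and expectation, Doob's $L^2$ inequality, the crude bound $\|X\|_{HS}^2\le Y$, and \eqref{666} yield
\begin{equation*}
\E\sup_{s\le t}|M(s)|\le 4\sqrt{\E[M]_t}\le 8\,\E\sqrt{\sup_{s\le t}Y(s)\cdot\int_0^t\kk_0(s)Y(s)\,ds}.
\end{equation*}
Young's inequality $\sqrt{ab}\le\tfrac12(\epsilon a+\epsilon^{-1}b)$ with the right $\epsilon$ absorbs $\tfrac12\E\sup_{s\le t}Y(s)$ into the left-hand side; the remaining coefficient of $\int_0^t\kk_0(s)\E Y(s)\,ds$ works out to exactly $64$, and a final application of Gronwall's inequality delivers \eqref{777}. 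The main obstacle I foresee is bookkeeping of constants, in particular matching the factor $64$ in the exponent: it emerges from the interplay between Doob's $L^2$ inequality, the quadratic-variation bound, and the optimal Young weight for the absorption step.
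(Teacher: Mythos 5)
Your overall strategy matches the paper's: truncate the coefficients by a Lipschitz retraction onto a ball, use Theorem~\ref{push} on the truncated system, patch the solutions via consistency of stopping times, establish non-explosion through It\^o's formula applied to $\|X\|_{HS}^2$ together with {\bf(H2)}, Gronwall and Chebyshev, and finally run a maximal-inequality/Gronwall argument to get \eqref{777}. Your non-explosion step uses monotonicity of the stopping times plus Chebyshev directly, while the paper invokes Borel--Cantelli; both are fine. The paper's only real stylistic departure is that, before estimating the martingale supremum, it performs a time change (L\'evy characterization) to reduce to a one-dimensional stochastic integral, whereas you work with the matrix-valued integral directly; that difference is cosmetic.

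There is, however, a genuine gap in your treatment of the martingale supremum. You write $\E\sup_{s\le t}|M(s)|\le 4\sqrt{\E[M]_t}\le 8\,\E\sqrt{\sup_{s\le t}Y(s)\cdot\int_0^t\kk_0(s)Y(s)\,ds}$, attributing the first step to Doob's $L^2$ inequality. Doob's $L^2$ inequality gives $\E\sup_{s\le t}|M(s)|^2\le 4\E[M]_t$, hence $\E\sup|M|\le 2\sqrt{\E[M]_t}$, with the expectation \emph{outside} the square root. The second inequality then requires $\sqrt{\E[M]_t}\le 2\,\E\sqrt{[M]_t}$ (up to notation), which is Jensen in the wrong direction: by concavity of the square root, $\E\sqrt{[M]_t}\le\sqrt{\E[M]_t}$, not the reverse. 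Concretely, if you stay with Doob's $L^2$ form you end up needing control of $\E Y(s)^2$ (second moments), which {\bf(H2)} and \eqref{666} do not supply. The Young-absorption step works only if the square root sits inside the expectation, i.e.\ you must use the Burkholder--Davis--Gundy inequality for $p=1$, $\E\sup_{s\le t}|M(s)|\le C_1\,\E\bigl([M]_t^{1/2}\bigr)$, which is exactly what the paper does (after the auxiliary time change). Replacing Doob by BDG and then applying $\sqrt{uv}\le\tfrac12(\varepsilon u+\varepsilon^{-1}v)$ with the optimal $\varepsilon$ recovers the factor $64$; with Doob the argument does not close.
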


\begin{proof}
	We take advantage of  the standard truncation approach to show existence of a solution to \eqref{Belfast1}, see, e.g., \cite{AlbBrzWu}. For any $R>0$, $t\in[0,T],$ and   $x\in \mathbb{R}^{n}\otimes \mathbb{R}^{n}$, let's define
	\begin{equation}
	b_R(t,x):=b\Big(t, \frac{\lVert x \rVert_{HS}\wedge R}{\lVert x
		\rVert_{HS}} x\Big),~~~\sigma_R(t,x):=\sigma\Big(t, \frac{\lVert
		x\rVert_{HS}\wedge R}{\lVert x\rVert_{HS}} x\Big).
	\end{equation}
	In lieu of  \eqref{Belfast1}, we consider the following $\R^n\otimes\R^n$-valued SDE
	\begin{equation}\label{pull5}
	dX^R(t)=b_R(t,X^R(t))dt+\sigma_R(t,X^R(t))dB(t),~~~t>0
	\end{equation}
	with the initial value $X^R(0)=x\in \mathbb{R}^{n}\otimes \mathbb{R}^{n}.$ A straightforward calculation shows that, for any $x,y\in\R^n\otimes\R^n,$
	\begin{equation*}
	\begin{split}
	&\lVert b_R(t,x)-b_R(t,y)\rVert^2_{HS}+\lVert\sigma_R(t,x)-\sigma_R(t,y)\rVert^2_{HS}\\
		&\leq k_t(R)\Big\rVert \frac{\lVert x \rVert_{HS}\wedge R}{\lVert x \rVert_{HS}} x -\frac{\lVert y \rVert_{HS}\wedge R}{\lVert y \rVert_{HS}} y\Big\rVert_{HS}^2\\
		&=\begin{cases}
		k_t(R)\Big\lVert x-y \Big\rVert_{HS}^2     \mbox{~~~~~~~~~~~~~~~~~~~~~for~~} \lVert x\rVert_{HS}\vee \lVert y\rVert_{HS}\leq R;\\
		k_t(R)\Big\lVert x- \frac{R}{\lVert y\rVert_{HS}}y  \Big\rVert_{HS}^2 \mbox{~~~~~~~~~~~~~for~~} \lVert x\rVert_{HS}\leq R, \lVert y\rVert_{HS}\geq R;\\
		k_t(R)\Big\lVert \frac{R}{\lVert x\rVert_{HS}}x-y\Big\rVert_{HS}^2    \mbox{~~~~~~~~~~~~~for~~} \lVert x\rVert_{HS}\geq R, \lVert y\rVert_{HS}\leq R;\\
		k_t(R)\Big\lVert \frac{R}{\lVert x\rVert_{HS}}x-\frac{R}{\lVert y\rVert_{HS}}y\Big\rVert_{HS}^2  \mbox{~~~~~~for~~} \lVert x\rVert_{HS}\geq R, \lVert y\rVert_{HS}\geq R;\\
		\end{cases}\\
		&\leq 4k_t^{(R)}\lVert x-y\rVert_{HS}^2.
		\end{split}
	\end{equation*}
	Namely, for fixed $R>0,$ $b_R$ and $\sigma_R$ satisfy the global Lipschitz condition {\bf (A1)} with $\kk_1(t)$ being replaced by $4k_t^{(R)}$. Hence,   besides {\bf (A2)}, we conclude that \eqref{pull5} has a unique strong solution $(X^R(t))_{t\ge0}$ according to Theorem \ref{push}.
	
	For any $R>\|x\|_{HS}$, define the stopping time  as
	\begin{equation*}
	\tau_{R}=\inf\{t\geq 0:\lVert X^R(t) \lVert_{HS}\geq R\}.
	\end{equation*}
	Also, notice that
	\begin{equation}\label{open1}
	dX^{R+1}(t)=b_R(t,X^{R+1}(t))dt+\sigma_R(t,X^{R+1}(t))dB(t),  \mbox{~~~~~}t\in [0,\tau _R]
	\end{equation}
with the initial value $X^{R+1}(0)=x\in\R^n\otimes\R^n$.
	In terms of the uniqueness of solution to \eqref{pull5}, one has
	$$X^{R}(t)=X^{R+1}(t)=X^{R+2}(t)=\cdots, \mbox{~~~~~}t\in [0,\tau _R],$$
	which further implies that
	$$\tau _R\leq \tau _{R+1}\leq \cdots.$$
	That is, $\tau_R$ is non-decreasing w.r.t. $R$. So, we can define $\tau _{\infty}=\lim_{R\rightarrow \infty}\tau_{R}. $ Furthermore, let's define $ X(t)=X^{R}(t),t\in[0,\tau_R]. $ Then, $(X(t))_{0\le t<\tau_\infty}$ is a  local maximal solution to \eqref{Belfast1}. Next, we show that the local maximal solution is in fact a global solution, i.e., $\tau_\infty=\infty$ or $\rho_k=\infty$, a.s., where, for $k>\|x\|_{HS}$,
	\begin{equation*}
	\rho_k:=\inf\{t\in(0,\tau_\infty):\|X(t)\|_{HS}\ge k\}.
	\end{equation*} 
	By It\^o's formula \eqref{cup}, it follows from {\bf(H2)} that
	\begin{equation*}\label{open2}
	\begin{split}
	&\E\lVert X(t\wedge \rho_k) \lVert_{HS}^2\\
	&=\lVert x\lVert_{HS}^2+\E\int _{0}^{t\wedge\rho_k }\{2\langle X(s),b(s,X(s))\rangle+\|\sigma(s,X(s))\|_{HS}^2\}ds\\
	&\leq\lVert x\lVert_{HS}^2+\E\int _{0}^{t\wedge\rho_k
	}\kk_s(1+\|X(s)\|_{HS}^2)ds\\
	&=\lVert x\lVert_{HS}^2+\E\int _{0}^{t\wedge\rho_k
	}\kk_{s\wedge\rho_k
}(1+\|X(s\wedge\rho_k)\|_{HS}^2)ds\\
&\leq\lVert x\lVert_{HS}^2+\int _{0}^{t
}\kk_s(1+\E\|X(s\wedge\rho_k)\|_{HS}^2)ds,
\end{split}
\end{equation*}
where in the last display we have used the non-decreasing property of $\kk_\cdot.$ Then,  the Gronwall inequality leads to
\begin{equation*}\label{open3}
\E\lVert X(t\wedge \rho_k) \lVert_{HS}^2\leq C_t:=\Big(\lVert
x\lVert_{HS}^2+\int _{0}^{t}\kk_{s}ds\Big) e^{\int _{0}^{t}\kk_{s}ds}.
\end{equation*}
This, combining with the definition of the stopping time $\rho_k,$ further implies that
\begin{equation*}\label{open4}
\begin{split}
C_t&\geq \E(\lVert X(t\wedge \rho_k) \lVert_{HS}^2I_{\{t\geq \rho_k\}})+ \E(\lVert X(t\wedge \rho_k) \lVert_{HS}^2I_{\{t< \rho_k\}})\\
&\geq \E(\lVert X(t\wedge \rho_k) \lVert_{HS}^2I_{\{t\geq \rho_k\}})\\
&=\E(\lVert X(\rho_k) \lVert_{HS}^2I_{\{t\geq \rho_k\}})\\
&=k^2\mathbb{P}(t\geq \rho_k),
\end{split}
\end{equation*}
where $I_{\{\cdot\}} $ denotes the indicator function associated with the set $\{\cdot\}$. Thus, we obtain from Chebyshev's  inequality that
\begin{equation}\label{open5}
\mathbb{P}(t\geq \rho_k)\leq C_t/k^2.
\end{equation}
Since  $\sum_{k=1}^{\infty}\mathbb{P}(t\geq \rho_k)\le
C_t\sum_{k=1}^{\infty} \frac{1}{k^2}$, which is  convergent,  the Borel-Cantelli lemma implies that
$$ \mathbb{P}(t\geq \tau_{\infty})=0.$$
So $\tau_\infty=\infty$, a.s. due to the arbitrariness of $t$. Then we complete the existence and uniqueness for \eqref{Belfast1} under the conditions {(\bf{A2})}, {(\bf{H1})} and {(\bf{H2})}. Now, we show the second part of this theorem.

Also, by It\^o's formula \eqref{cup}, it follows that
\begin{equation*}
d X(t)=\{2\langle
X(t),b(t,X(t))\rangle_{HS}+\|\sigma(t,X(t))\|_{HS}^2\}d t+2\langle
X(t),\sigma(t,X(t))d B(t)\rangle_{HS}.
\end{equation*}
This, in addition to {\bf(H2)}, yields that
\begin{equation}\label{88}
\begin{split}
&\E\Big(\sup_{0\le s\le t}\|X(s)\|_{HS}^2\Big)\\
&\le\|x\|_{HS}^2+\Upsilon(t)+\E\Big(\sup_{0\le s\le
	t}\Big\|\int_0^s\{2\langle
X(u),b(u,X(u))\rangle_{HS}\\
&\quad+\|\sigma(u,X(u))\|_{HS}^2\}d u\Big)\\
&\le\|x\|_{HS}^2+\Upsilon(t)+\int_0^t\kk_s(1+\E\lVert
X(s)\rVert^2_{HS}) d s,
\end{split}
\end{equation}
where
\begin{equation*}
\Upsilon(t):=2\E\Big(\sup_{0\le s\le t}\Big\|\int_0^s\langle
X(u),\sigma(u,X(u))d B(u)\rangle_{HS}\Big\|_{HS}^2\Big).
\end{equation*}
For any $u\in[0,T]$, define
\begin{equation*}
\psi(u)=
\begin{cases}
\frac{\sigma^T(u,X(u))X(u)}{\|\sigma^T(u,X(u))X(u)\|_{HS}},~~~~\sigma^T(u,X(u))X(u)\neq0,\\
e_{11},~~~~~~~~~~~~~~~~~~~~~~~~~\sigma^T(u,X(u))X(u)=0.
\end{cases}
\end{equation*}
It is easy to see that $\|\psi(u)\|_{HS}=1$. Set 
\begin{equation*}
\beta(t)=\int_0^t\<\psi(s),d B_s\>. 
\end{equation*}
Then, $\beta(t)$ is a martingale, where  the quadratic variation is $t$ due to $\|\psi(u)\|_{HS}=1$. Then, according to the L\'{e}vy characterization of Brownian motion, we conclude that $\beta(t)$ is an $\R$-valued Brownian motion. Note that 
\begin{equation*}
\begin{split}
&\int_0^s\|\sigma^T(u,X(u))X(u)\|_{HS}d \beta(u)\\
&=\int_0^s\|\sigma^Tu,X(u))X(u)\|_{HS} \<\psi(u),d B_u\>_{HS}\\
&=\int_0^s \<\sigma^T(u,X(u))X(u),d B_u\>_{HS}.
\end{split}
\end{equation*}
Hence, one has 
\begin{equation*}
\Upsilon(t):=2\E\Big(\sup_{0\le s\le t}\Big|\int_0^s\|\sigma(u,X(u)))^TX(u)\|_{HS}d \beta(u)\Big|^2\Big).
\end{equation*}
As a result, by the Burkhold-Davis-Gundy inequality, $2ab\le\varepsilon a^2+b^2/\varepsilon$ for any $a,b\in\R$ and $\varepsilon>0$ as well as \eqref{666},
\begin{equation}\label{888}
\begin{split}
\Upsilon(t)&\le8\sqrt{2}\E\Big(\int_0^t\|\sigma^T(s,X(s))X(s)\|_{HS}^2d
s\Big)^{1/2}\\
&\le8\sqrt{2}\E\Big(\sup_{0\le s\le
	t}\|X(s)\|_{HS}^2\int_0^t\|\sigma(s,X(s))\|_{HS}^2d s\Big)^{1/2}\\
&\le\frac{1}{2}\E\Big(\sup_{0\le s\le
	t}\|X(s)\|_{HS}^2\Big)+64\int_0^t\E\|\sigma(s,X(s))\|_{HS}^2 d s\\
&\le\frac{1}{2}\E\Big(\sup_{0\le s\le
	t}\|X(s)\|_{HS}^2\Big)+64\int_0^t\kk_0(s)(1+\E\|X(s)\|_{HS}^2) d s.
\end{split}
\end{equation}
Inserting \eqref{888} into \eqref{88}, we derive that
\begin{equation*}
\begin{split}
\E\Big(\sup_{0\le s\le t}\|X(s)\|_{HS}^2\Big)
&\le\|x\|_{HS}^2+\frac{1}{2}\E\Big(\sup_{0\le s\le
	t}\|X(s)\|_{HS}^2\Big)\\
&\quad+\int_0^t(\kk_s+64\kk_0(s))(1+\E\lVert
X(s)\rVert^2_{HS}) d s.
\end{split}
\end{equation*}
Hence,
\begin{equation*}
\begin{split}
\E\Big(\sup_{0\le s\le t}\|X(s)\|_{HS}^2\Big) \le2\|x\|_{HS}^2
+2\int_0^t(\kk_s+64\kk_0(s))(1+\E\lVert
X(s)\rVert^2_{HS}) d s.
\end{split}
\end{equation*}
Thus, by Gronwall's inequality we obtain the  inequality \eqref{777}.
\end{proof}

Comparing with Theorem \ref{push}, we remark that Theorem \ref{pull} provides much weaker conditions to guarantee the existence and uniqueness of \eqref{Belfast1}, more precisely, 
under local Lipschitz condition and monotone condition.

\newpage

\end{document}